\theoremstyle{plain}
\newtheorem{theorem}{Theorem}[section]
\newcommand{\R}{\mathbb{R}}
\newcommand{\dive}{\operatorname{div}}
\theoremstyle{definition}
\newtheorem{definition}[theorem]{Definition}
\newtheorem{proposition}[theorem]{Proposition}
\newtheorem{remark}[theorem]{Remark}
\newtheorem{corollary}[theorem]{Corollary}
\newtheorem{lemma}[theorem]{Lemma}
\newtheorem{example}[theorem]{Example}
\renewenvironment{proof}{{\noindent \bf  Proof.}}{\qed}
\def\LHM{\mathcal H^\nu}
\def\LHMp{ H^\nu}
\def\LH{\mathcal H^\nu_0}
\def\LHp{ H^\nu_0}
\def\LHkill{\mathcal H^{\nu,\text{kill}}_0}
\let\orgdescriptionlabel\descriptionlabel
\renewcommand*{\descriptionlabel}[1]{%
  \let\orglabel\label
  \let\label\@gobble
  \phantomsection
  \edef\@currentlabel{#1}%
  \let\label\orglabel
  \orgdescriptionlabel{#1}%
}
\numberwithin{equation}{section}
\begin{document}
\title[Space-time coupled evolution equations and their stochastic solutions]{Space-time coupled evolution equations and their stochastic solutions}

\author[John Hermanz]{John Herman}
\address{John Herman \newline Department of Mathematics, University of Warwick, UK}
\email {j.a.Herman@warwick.ac.uk}

\author[Ifan Johnston]{$\,\,$Ifan Johnston$\,$}
\address{Ifan Johnston \newline Department of Mathematics, University of Warwick, UK}
\email {i.johnston@warwick.ac.uk}

\author[Lorenzo Toniazzi]{$\,\,$Lorenzo Toniazzi$\,$}
\address{Lorenzo Toniazzi \newline Department of Applied Physics and Applied Mathematics, Columbia University NY, USA}
\email {lt2739@columbia.edu}
\date{\today}

\begin{abstract}
We consider a class of space-time coupled evolution equations (CEEs), obtained by a subordination of the heat operator. Our CEEs reformulate and extend known governing equations of non-Markovian processes  arising as scaling limits of continuous time random walks, with widespread applications. In particular we allow for initial conditions imposed on the past, general spatial operators on Euclidean domains and a forcing term.  We prove existence, uniqueness and stochastic representation for solutions. 
\end{abstract}
\subjclass[2010]{35R11, 45K05, 35C15, 60H30}
\keywords{Space-time coupled evolution equation, Feller semigroup, Subordination, Exterior boundary conditions, Feynman-Kac formula}

\maketitle

\section{Introduction}
We study the space-time coupled evolution equation (CEE)
	\begin{equation}\label{preRLHO_nl}
\left\{
\begin{split}
 H^\nu u(t,x)    &=-f(t,x), & \text{in }& (0,T]\times \Omega,\\ 
  u(t,x)&=\phi(t,x),  &\text{in }&(-\infty,0]\times \Omega,
\end{split}
\right.
\end{equation}
where $f$ and $\phi$ are given data and 
	\begin{equation}
H^\nu u(t,x)= \int_0^\infty \left(e^{r\mathcal L}u(t-r,x)-u(t,x) \right)\nu(r)\,dr,\quad t>0,
	\label{eq:Hnu}
	\end{equation}
	so that $-H^\nu=(\partial_t-\mathcal L)^\nu$ is the subordination of the heat operator $(\partial_t-\mathcal L)$ by an infinite L\'evy measure $\nu$. 	Here the Markovian semigroup $\{e^{r\mathcal L}\}_{r\ge0}$ acts on the space variable $\Omega\subset\mathbb R^d$, and we denote the associated process by $B=r\mapsto B_r$. As our main result,  we prove the stochastic representation for the solution to \eqref{preRLHO_nl} to be
	\begin{equation}
u(t,x)=\mathbf E\left[\phi\left(t-S^\nu_{\tau_0(t)},B^x_{S^\nu_{\tau_0(t)}}\right)\mathbf1_{\{\tau_0(t)<\tau_0(\tau_\Omega(x))\}} \right] + \mathbf E\left[\int_0^{\tau_0(t)\wedge \tau_0(\tau_\Omega(x))}f\left(t-S^\nu_{s},B^x_{S^\nu_s}\right)ds \right],
\label{eq:SR}
\end{equation}
where $S^\nu$ is the L\'evy subordinator induced by $\nu$, $S^\nu$ is independent of $B^x$,   with $x$ denoting the starting point of $B$, $\tau_0(t)=\inf\{r>0: t-S^\nu_r< 0\}$ is the inverse of $S^\nu$ and $\tau_\Omega(x)$ is the life time of $B^x$, $x\in\Omega$. Note that there is a possible intuition for the initial condition in the \emph{past}, as the time parameters of $\phi$ are weighted according to $(S^\nu_{\tau_0(t)}-t)$, which is the waiting/trapping time of the non-Markovian process $t\mapsto B_{S^\nu_{\tau_0(t)}}$.\\
Let us first clarify formula \eqref{eq:SR} for $f=0$. Observe that in \eqref{preRLHO_nl} our operator \eqref{eq:Hnu} is subject to the \emph{exterior/absorbing}  boundary condition $u=\phi$ on $(-\infty,0]\times \Omega$. Also, \eqref{eq:Hnu} is  the generator of the coupled Markov process 
\begin{equation}
r\mapsto \left(t-S^\nu_{r}, B^x_{S^\nu_{r}}\right),\quad (t,x)\in (0,T]\times\Omega.
\label{eq:abs}
\end{equation}
Then we expect the solution to be the absorption of process \eqref{eq:abs} on $(-\infty,0]\times \Omega$, on its first attempt to exit $(0,T]\times \Omega$, which indeed happens at time $\tau_0(t)$ (assuming for simplicity $\tau_\Omega=\infty$). This results in formula  \eqref{eq:SR}. This absorption interpretation can be seen in the more standard case of the fractional Laplacian  with exterior boundary condition \cite[Theorem 1.3]{k17}, or in a general setting in \cite{miao}.\\


Select now time independent initial data $\phi(t)=\phi_0$, $f=0$, $d=1$ and let $r\mapsto B_r$ be a L\'evy process with density $p_r(\cdot)$. Notating $\Phi(dy,dr)=p_r(y)\nu(r)\,dy\,dr$, we can now write 
\begin{equation*}
\text{\eqref{eq:Hnu}}= \int_{\R^+\times\R}\left(\mathbf 1_{\{t-r>0\}}u(t-r,x-y)-u(t,x) \right)\Phi(dy,dr)+\int_{\R} \phi_0(x-y)\Phi(dy,(t,\infty)),
	\label{eq:Hnu_ct}
\end{equation*}
 and the CEE \eqref{preRLHO_nl} is a particular case of \cite[Theorem 4.1, equation (4.1)]{JKMS12}. In \cite{JKMS12}, problem \eqref{preRLHO_nl}  appears in Fourier-Laplace space as
\[
p(\gamma,\xi)=
\frac{1}{\gamma}\frac{\psi_\nu(\gamma+\psi_B(\xi))-\psi_\nu(\psi_B(\xi))}{\psi_\nu(\gamma+\psi_B(\xi))}, \quad \gamma>0,\,\xi\in\R,
\]
and it is shown that the Fourier-Laplace transform of the law of $M_t=B_{S^\nu_{\tau_0(t)}}$ satisfies the above identity, where $\psi_B$ is the Fourier symbol of $B$ and $\psi_\nu$ the Laplace symbol of $S^\nu$. The authors in \cite{JKMS12} also show that $M_t$ arises as the scaling limit of overshoot continuous time random walks (OCTRWs). The overshoot is reflected in the time change living above $t$, in the sense that $S^\nu_{\tau_0(t)}>t$ \cite[III, Theorem 4]{bertoin}. Notice that $M_t$ is trapped precisely when $\tau_0$ is constant, like the fractional-kinetic process \cite[Chapter 2.4]{Meerschaert2012}. But the duration of a waiting  time induced by $\tau_0$ equals the length of the \emph{last} discontinuity of $S^\nu_{\tau_0}$, mirrored in the coupling of space ($B_{S^\nu}$) and time ($\tau_0$). Also, if the subordination is performed by a $\alpha$-stable process $S^\alpha$, then  $M_t$ scales like $B_t$, because $S^\alpha_{\tau_0(t)}=tS^\alpha_{\tau_0(1)}$. The related literature known to us deals with variations of the CEE in  Fourier-Laplace space, mostly motivated by central limit theorems for coupled OCTRWs.  See \cite{MT15,HS11} 
 for multidimensional extensions of OCTRW limits, 
  \cite{MZ15} 
 for explicit densities in certain fractional cases, and \cite{WSJal12,JMS11} 
for alternatives to the first derivative in time.  Due to their peculiar properties OCTRWs are popular models appearing for instance in  physics \cite{Metal10,ZDK15,Z06,FSBZ15,WJM05}, and finance \cite{JWZ09}. 
 Worth mentioning that the OCTRW limit first appeared in \cite{JKMS12}  as the overshooting counterpart of CTRW limits studied in \cite{BMS04,BMM05,MS07}, which result in different CEEs. In this latter case, the counterpart of \eqref{preRLHO_nl} expects the solution to be the subordination of $B$ by $S^\nu_{\tau_0(t)-}$, for $ S^\nu_{s-}$ the left continuous modification of $ S^\nu_{s}$. We could not treat this case, as our method relies on Dynkin formula, and we could not recover a version for the left continuous process $ S^\nu_{s-}$. Note that, although related, problem \eqref{preRLHO_nl} is different from \cite[problem (1.1)]{StingaTorrea17}, as the latter does not impose initial conditions, and in turn it does not describe an anomalous diffusion.  \\
	
To the best of our knowledge, the novel contribution of this article is the following. A general probabilistically natural method to treat wellposedness and stochastic representation for the CEE \eqref{preRLHO_nl} when it features:  initial conditions in the past,  rather general spatial operators on Euclidean domains, a forcing term. Moreover, our proof method tightly follows \cite{DTZ18} and \cite{T18}, which treat the rather different uncoupled EEs of Caputo/Marchaud-type. Therefore proposing a unified method for a large class of fractional/nonlocal EEs with initial conditions in the past,
 without relying on Fourier-Laplace transform techniques.\\
In Theorem \ref{thm:generalised_solution} we prove wellposedness and stochastic representation for \emph{generalised solutions}, which are defined as pointwise limits of solutions to abstract CEEs obtained through semigroup theory. We only assume existence of densities for the Feller process $r\mapsto B_r$, bounded forcing term, but we assume time independent initial conditions  in the domain of the generator of $r\mapsto B_{S^\nu_r}$. In Theorem \ref{thm:weak} we prove that  \eqref{eq:SR} is a \emph{weak solution} for \eqref{preRLHO_nl} for bounded data and $e^{\mathcal L}$ self-adjoint on a bounded domain. We could not prove uniqueness, which appears to be a subtle problem already for the (uncoupled) Marchaud-Caputo EE \cite{Allen17}.\\



The article is organised as follows; Section \ref{sec:pre} introduces general notation, our assumptions, and the main semigroup results  used to treat the operator $H^\nu$; Section \ref{sec:generalised} proves Theorem \ref{thm:generalised_solution} and presents some concrete fundamental solutions to \eqref{preRLHO_nl}; Section \ref{sec:weak} proves Theorem \ref{thm:weak}.  

\section{Notation and subordinated heat operators}\label{sec:pre} We denote by $\R^d,\,\mathbb N,$ $\text{a.e.}$, $a\vee b$ and $a\wedge b$, the   $d$-dimensional Euclidean space, the positive integers, the statement almost everywhere with respect to Lebesgue measure,  the maximum and the minimum between $a,b\in \R$, respectively. We denote by $\Gamma(\beta)$ the Gamma function for $\beta\in (-1,0)\cup (0,\infty)$, and we recall the standard identity $\Gamma(\beta+1)=\Gamma(\beta)\beta$. We write $C_\infty(E)$ for continuous real-valued functions on $E\cup\{\partial\}$, vanishing at infinity on $E$, such that  $f(\partial)=0$, where   $E\cup\{\partial\}$ is the one-point compactification of $E\subset \R^d$. We denote by  $B(E)$ the set of real-valued  bounded measurable functions on $E$. We define the time-space continuous functions spaces for a set $\Omega\subset\mathbb R^d$
\begin{align*}
C_{\partial \Omega}([0,T]\times \Omega):=&\,C_{\infty}([0,T]\times \Omega),\\
C_{0,\partial \Omega}([0,T]\times \Omega):=&\,C_{\partial \Omega}([0,T]\times \Omega)\cap\{f(0)=0\}.
\end{align*}
All the above functions spaces are considered as Banach spaces with the supremum norm. We define $C^1_\infty(-\infty, T]=\{f,f'\in C_\infty(-\infty,T]\}$, with $T\ge0$, and $C^1_c(0,T)=\{f,f'\in C_\infty(\R),\, f\text{ has compact support in }(0,T)\}$, with $T>0$. For two sets of real-valued functions $F$ and $G$ we define 
\[
F\cdot G:=\{f\cdot g:f\in F,\,  g\in G\}.
\]
For a sequence of functions $\{f_n\}_{n\ge1}$ and a function $f$, we write $f_n\to f$ \emph{bpw (bpw a.e.)} if  $f_n$ converges to $f$ pointwise (a.e.) as $n\to\infty$, and the supremum (essential supremum) norms $\|f_n\|_\infty$ are uniformly bounded in $n$. We denote by $L^1(\Omega)$, $L^2(\Omega)$ and $L^\infty(\Omega)$ the standard Banach spaces of integrable, square-integrable and essentially bounded real valued functions on $\Omega$, respectively. In general, we denote by $\|\cdot\|_X$ the  norm of a Banach space $X$, meanwhile the notation  $\|\cdot\|$ is reserved for  the operator norm of a bounded linear operator between Banach spaces. For a set $E\subset \R^d$ we denote by $\overline E$ the closure of $E$ in $\R^d$.

 The notation we use for an $E$-valued stochastic process started at $x\in E $ is $X^x=\{X^x_s\}_{s\ge0}=s\mapsto X^x_s$. Note that the symbol $t$ will often be used to denote the starting point of a stochastic process with state space $E \subset \mathbb R$. By a \emph{ strongly continuous contraction semigroup} $e^{\mathcal G}$ we mean a collection of bounded linear operators $e^{s\mathcal G}:X\to X$, $s\ge0$, where $X$ is a Banach space, such that $e^{(s+r)\mathcal G}=e^{s\mathcal G}e^{r\mathcal G}$, for every $s,r> 0$,  $e^{0\mathcal G}$ is the identity operator, $\lim_{s\downarrow 0}e^{s\mathcal G}f=f$ in $X$, for every $f\in X$,  and $\sup_s\|e^{s\mathcal G}\|\le1$. The generator of $e^{\mathcal G}$ is defined as the pair $(\mathcal G,\text{Dom}(\mathcal G))$, where $\text{Dom}(\mathcal G):=\{f\in X: \mathcal G f:=\lim_{s\downarrow0}s^{-1}(e^{s\mathcal G}f-f) \text{ exists in }X\}$. We say that a set $C\subset \text{Dom}(\mathcal G)$ is a \emph{core for} $(\mathcal G,\text{Dom}(\mathcal G))$ if the generator equals the closure of the restriction of $\mathcal G$ to $C$. Recall that $\text{Dom}(\mathcal G)$ is dense in $X$.
 For a given $\lambda\ge0$ we define the \emph{resolvent of} $e^{\mathcal G}$  by $(\lambda-\mathcal G)^{-1}:=\int_0^\infty e^{-\lambda s}e^{s\mathcal G} \,ds$, and recall that for $\lambda >0$, $(\lambda-\mathcal G)^{-1}:X\to \text{Dom}(\mathcal G)$ is a bijection and it solves the abstract resolvent equation
\begin{equation*}
\mathcal G(\lambda-\mathcal G)^{-1}f=\lambda (\lambda-\mathcal G)^{-1}f-f,\quad f\in X,
\end{equation*}
 see for example \cite[Theorem 1.1]{Dyn65}. By a \emph{Feller semigroup} we mean a strongly continuous contraction semigroup $e^{\mathcal G}$ on any of the (compactified) Banach spaces of continuous functions defined above such that $e^{\mathcal G}$ preserves non-negative functions. 
Feller semigroups are in one-to-one correspondence with Feller processes, where a \emph{Feller process }is a time-homogenous sub-Markov process $\{X_s\}_{s\ge0}$ such that $s\mapsto e^{s\mathcal G}f(x):=\mathbf E[f(X_s)|X(0)=x]$, $f\in X$ is a Feller semigroup \cite[Chapter 1.2]{Schilling}. We recall that every Feller process admits a c\'adl\'ag modification which enjoys the strong Markov property \cite[Theorem 1.19 and Theorem 1.20]{Schilling}, and we always work with such modification. We say that a Feller semigroup is \emph{strong Feller} if $e^{r\mathcal G}$ maps bounded measurable functions to continuous functions for each $r>0$. 

\subsection{The spatial operator $\mathcal L$}

\begin{definition}\label{def:Hs}
We define $(\mathcal L,\text{Dom}(\mathcal L))$ to be the generator of a Feller semigroup $\{e^{r\mathcal L}\}_{r\ge0}$ on $C_\infty(\Omega)$, where the set $\Omega\subset \mathbb R^d$ is either bounded open,  the closure of an open set or compact. We denote the associated Feller process by $B^x=s\mapsto B_s^x$, when started at $x\in \Omega$.  As usual, the Feller process $s\mapsto B_s^x$ is defined to be in the cemetery if $s\ge \tau_\Omega(x)$, defining the life times $\tau_\Omega(x)=\inf\{s>0:B^x_s\notin \Omega\}$, $x\in\Omega$, so that $B^x_s=B^x_{s\wedge \tau_\Omega(x)}$.
\end{definition}
We will use the following assumption for the spatial semigroup $e^{\mathcal L}$.
\begin{description}
\item[(H1)\label{H1}] The operator  $e^{r\mathcal L}$ allows a density with respect to Lebesgue measure for each $r>0$, which we denote by $y\mapsto p^\Omega_r(x,y)$, $x\in \Omega$.
\end{description}

\begin{definition}
For a Feller process in $\R^d$, we say that $\Omega$ is a \emph{regular set} if  $\Omega\subset\R^d$  is open, and for each $z\in\partial \Omega$, $\mathbf P[\tau_\Omega(z)=0]=1$. Here $\partial \Omega$ denotes the Euclidean boundary of $\Omega$.
\end{definition}

\begin{example}\label{ex:spati} We mention some examples of Feller processes that satisfy  \ref{H1}, including several nonlocal and fractional derivatives on $\mathbb R^d$ and on bounded domains.
\begin{enumerate}[(i)]
\item  Diffusion processes in $\Omega=\R^d$ with generator $\dive(A(x)\nabla)$, where $A:\R^d \to\R^d \times\R^d $ is a  matrix valued function which is bounded, measurable, positive, symmetric and uniformly elliptic \cite[Theorem II.3.1, p. 341]{Stroock88}. Moreover the density $(t,x,y)\mapsto p^\Omega_t(x,y)$ is continuous on $(0,\infty)\times\R^d \times\R^d $, and the induced Feller semigroup is strong Feller (which follows by the Aronson estimate \cite[formula (I.0.10)]{Stroock88}).

\item All strong Feller L\'evy processes ($\Omega=\mathbb R^d$). Indeed this is a characterisation \cite[Lemma 2.1, p.338]{hawkes1979potential}. See \cite[Chapter 5.5]{Kuhn17} for a discussion. This class includes all stable L\'evy processes.
	
\item Possible conditions on L\'evy-type  or L\'evy measures $\kappa(x,dy)$ ($\Omega=\mathbb R^d$)  are
	\begin{enumerate}[(a)]
\item kernels $\kappa(dy)$ for $d=1$ such that $\kappa(dy)\ge y^{-1-\alpha}dy$ for all small $y$ \cite[Proposition 28.3]{Sato};
\item kernels $\kappa(y)$, such that $\int_{\R^d \backslash\{0\}} \kappa(y)\,dy=\infty$ \cite[Theorem 27.7]{Sato};
\item kernels $\kappa(x,y)$ such that the respective symbols satisfies the H\"older continuity-type conditions  in \cite[Theorem 2.14]{Kuhn17}, and see also \cite[Theorem 3.3]{Kuhn17}.
\end{enumerate}
		\item Clearly any Feller processes $X$ taking values in $\mathbb R^d$ such that its density is continuous. If $X$ is also strong Feller and  $\Omega\subset \mathbb R^d$ is a regular set, then the process killed upon the first exit from $\Omega$ is a Feller process on $\Omega$  \cite[p. 68]{chung1986doubly}, and it has a continuous density (which can be proved by the strong Markov property as in \cite[formula (4.1)]{MeerChen}). This case includes the regional fractional Laplacian $(-\Delta)^\beta_\Omega$ \cite{MeerChen}.
	
		\item Any subordination of a Feller process by a L\'evy subordinator which itself satisfies \ref{H1}, which is a straightforward consequence of \cite[Theorem 4.3.5]{jacob2001pseudo}. This case includes the spectral fractional Laplacian $(-\Delta_\Omega)^\beta$ \cite{BV17,Bogdan}.
	
	\item We mention the articles \cite{chen2010two, grzywny2018heat} and references therein for related discussions about  some jump-type generators with symmetric and non-symmetric kernels.
	
	\item The 1-$d$ reflected Brownian motion \cite[Chapter 6.2]{BW}, so that $\Omega=[0,\infty)$, and $\mathcal L =\partial_x^2$, endowed with the Neumann boundary condition on $(0,T]\times\{0\}$.

	\item The restriction to $C_\infty(\overline\Omega)$ of the $L^2(\Omega)$ semigroup generated by the divergence operator $\dive(A(x)\nabla)$ with Neumann boundary conditions on a Lipschitz open bounded connected set $\Omega\subset\R^d$, for the same coefficients $A$ as in Example \ref{ex:spati}-(i).  This is a consequence of \cite[Theorem 3.10, Section 2.1.2]{GP11}.

	
	\item The reflected spectrally negative $\beta$-stable L\'evy process on  $\Omega=[0,\infty)$, for $\beta\in(1,2)$ \cite[Theorem 2.1, Corollary 2.4]{MB15}. In this case 
	\[
	\mathcal L u(x) = \partial_x^\beta u(x)=\int_0^xu''(y)\frac{(x-y)^{1-\beta}}{\Gamma(2-\beta)}\,dy,\quad x>0,
	\]
	 for $u$ in the core given in \cite[Theorem 2.1]{MB15}, which features $u'(0)=0$ at 0. Note that $\partial_x^\beta$ is the Caputo derivative of order  $\beta\in(1,2)$ \cite{kai}. Interestingly \cite[Theorem 2.3]{MB15}, the corresponding forward equation satisfies a fractional Neumann boundary condition $D_{-y}^{\beta-1}u(0)=0$, where $D_{-y}^{\beta-1}$ is the Marchaud derivative 
	\[
	D_{-y}^{\beta-1}u(y)=\partial_y\int_y^\infty u(r)\frac{(r-y)^{1-\beta}}{\Gamma(2-\beta)}\,dr,\quad y\ge0.
	\] 
	\end{enumerate}
\end{example}

For our notion of weak solution in Section \ref{sec:weak} we will use a stronger assumption for the spatial semigroup. Namely:
\begin{description}
\item[(H1')\label{H1'}] the set $\Omega$ is a bounded open subset of $\R^d$, and   $e^{\mathcal L}$ is a Feller semigroup on $X=C_\infty(\Omega)$ or $X=C_\infty(\overline\Omega)$ such that assumption \ref{H1} holds, and $e^{\mathcal L}$ is self-adjoint, in the sense that for each $r>0$
\begin{equation}
\int_\Omega e^{r\mathcal L}v(x)\,w(x)\,dx=\int_\Omega v(x)\,e^{r\mathcal L}w(x)\,dx,\quad v,w\in X.
\label{eq:sa}
\end{equation}
\end{description}

\begin{example}\label{ex:H10}
\begin{enumerate}[(i)]
	\item Assumption \ref{H1'} holds for several processes obtained by killing a Feller process on $\R^d$ upon exiting  a regular bounded domain $\Omega$.  This is for example the case of the Dirichlet Laplacian $-\Delta_\Omega$, the regional fractional Laplacian $(-\Delta)^{\beta}_\Omega$ and the  spectral fractional Laplacian $(-\Delta_\Omega)^{\beta}$, $\beta\in(0,1)$. These killed semigroups are Feller, as explained in Example \ref{ex:spati}-(iv)-(v). Property \eqref{eq:sa} follows by the eigenfunction decomposition  of the $L^2(\Omega)$ extension of the killed Feller semigroup \cite{Davies,MeerChen,Bogdan}, along with $C_\infty(\Omega)\subset L^2(\Omega)$. More generally, one can use the theory  regular symmetric Dirichlet forms, for example combining \cite[Proposition 3.15]{Schilling} with \cite[Corollary 3.2.4-(ii)]{ChenFuk12}.  This examples correspond to $0$ boundary conditions on $\partial \Omega$ or $\Omega^c$.
	
\item 
Assumption \ref{H1'} holds for the Feller semigroup of Example \ref{ex:spati}-(viii), as an immediate consequence of the semigroup being generated by a (symmetric) regular Dirichlet form \cite[Theorem 3.10]{GP11}. One can also consider an appropriate subordination of the Feller semigroup of Example  \ref{ex:spati}-(viii), as mentioned in Example \ref{ex:spati}-(v). Then \ref{H1} still holds along with property \eqref{eq:sa}, which can be seen by applying the eigenfunction expansion to the subordinated semigroup. This examples correspond Neumann boundary conditions on $\partial \Omega$.
\end{enumerate}
\end{example}

\begin{remark}
We could allow $\Omega=\R^d$ in assumption \ref{H1'},  but it would affect the clarity of the exposition, as  we would have to consider extra cases in several steps in Section \ref{sec:weak}.
\end{remark}



\subsection{Subordinators and subordinated heat operators}
We will always assume the following.
\begin{description}
\item[(H0)\label{H0}] Denote by $\nu:(0,\infty)\to [0,\infty)$ any  continuous function such that 
\[
\int_0^\infty (r\wedge 1)\nu(r)\,dr<\infty\quad\text{and}\quad\int_0^\infty \nu(r)\,dr=\infty.
\]
\end{description}

\begin{definition}\label{def:Hsnu}
 We denote
 by $S^\nu=\{S^\nu_r\}_{r\ge0}$  \emph{the L\'evy subordinator for} $\nu$, characterised by the log-Laplace transforms $\log\mathbf E[e^{-kS^\nu_r}]=r\int_0^\infty (e^{-ks}-1)\nu(s)\,ds$, for $r,k>0$. We define the first exit/passage times 
\[
\tau_0(t):=\inf\{r>0:S^\nu_r>t\},\quad t>0.
\]
\end{definition}

\begin{remark}\label{rmk:dens}
\begin{enumerate}[(i)]
\item Recall that for each  $r>0$, the random variable $S^\nu_r$ allows a density \cite[Theorem 27.7]{Sato}, which we denote by  $p^\nu_r$.

	\item 
Recall that for every $t\in (0,T]$
$$
\int_0^\infty\int_0^tp^\nu_s(t-z)\,dz\,ds=\mathbf E[\tau_0(t)]\le \mathbf E[\tau_0(T)]<\infty,
$$ 
see for example \cite[Theorem 19 and page 74]{bertoin}. In particular $\sup_{t\in(0,T]}\mathbf E[\tau_0(t)]<\infty$.

\item  To obtain the stable subordinator case select
$$
\nu(r):= r^{-1-\alpha}/|\Gamma(-\alpha)|,\quad r>0,\quad\alpha\in(0,1),
$$
then $S^\nu=S^\alpha$ is the  the $\alpha$-stable subordinator, characterised by the Laplace transforms $\mathbf E[e^{-kS^\nu_r}]=e^{-rk^\alpha}$, for $r,k>0$. Denoting its densities  by $p^\alpha_s$, $s>0$, recall that 
$$
\mathbf E[\tau_0(t)]=\frac{t^\alpha}{\Gamma(\alpha+1)},
$$ 
see for example \cite[Example 5.8]{Bogdan}. 
\item We refer to \cite[Chapter 5.2.2]{Bogdan} for examples of  subordination kernels $\nu$.
\end{enumerate}
\end{remark}

We define three semigroups that correspond to three different space-time valued of processes related to the heat operator $-\partial_t+\mathcal L$. Namely the `free' process $s\mapsto(t-s,B^x_s)$, the `absorbed at 0' process $s\mapsto((t-s)\vee 0,B^x_s)$, and the `killed at 0' process $s\mapsto((t-s),B^x_s)$ for $t>s$ and $\partial$ otherwise. It is straightforward to prove that such semigroups are Feller and we omit the proof. 
\begin{definition}\label{def:HSG} Define the operators  $e^{s(-\partial_t)}u(t):=u(t-s)$ and $e^{s(-\partial_{t,0})}u(t):=u((t-s)\vee 0)$, $t\in \mathbb R$, $s\ge0$, acting on the time variable. With  the semigroup $e^{\mathcal L}$ acting   on the $\Omega$-variable, define the three Feller semigroups
\begin{align*}
e^{s\mathcal H}&:= e^{s(-\partial_t)}e^{s\mathcal L},\quad &\text{on }&C_\infty((-\infty,T]\times\Omega),\quad &s&\ge0,\\
e^{s\mathcal H_0}&:= e^{s(-\partial_{t,0})}e^{s\mathcal L},\quad &\text{on }&C_{\partial \Omega}([0,T]\times\Omega),\quad &s&\ge0,\\
e^{s\mathcal H_0,\text{kill}}&:= e^{s\mathcal H_0},\quad &\text{on }&C_{0,\partial \Omega}([0,T]\times\Omega),\quad &s&\ge0,
\end{align*}
with the respective generators denoted by 
\[
 (\mathcal H,\text{Dom} (\mathcal H ) ),\quad (\mathcal H_0,\text{Dom} (\mathcal H_0 ) ),\quad\text{and}\quad (\mathcal H_0^{\text{kill}},\text{Dom} (\mathcal H_0^{\text{kill}} ) ).
\]  
\end{definition}
\begin{remark}\label{rmk:on_HSG}
 Note that 	
	\begin{align*}
	e^{r\mathcal H}u(t,x)=e^{r\mathcal L}u(t-r,x)=\mathbf E\left [u(t-r, B^x_r)\right],\quad \text{and}\quad e^{0\mathcal H}u(t,x)=u(t,x).
	\end{align*}
	\end{remark}

We now define three semigroups that respectively  correspond to subordinating  the three semigroups in Definition \ref{def:HSG} by an the independent L\'evy subordinator $S^\nu$.

\begin{definition}\label{def:Hs3}
For appropriate functions $u$, we define for $r>0$ 
\begin{align}\label{eq:SGH}
e^{r\mathcal H^\nu}u(t,x)&=\int_0^\infty e^{s\mathcal H}u(t,x)\,p^\nu_r(s)\,ds, & t\in\R,\\ \label{eq:SGHO}
e^{r\mathcal H^\nu_0}u(t,x)&=\int_0^t e^{s\mathcal H}u(t,x)\,p^\nu_r(s)\,ds+ \int_t^\infty e^{s\mathcal L}u(0,x) p^\nu_r(s)\,ds, &t\in[0,T],\\ \label{eq:SGHOkill}
e^{r\mathcal H^\nu_0,\text{kill}}u(t,x)&=\int_0^t e^{s\mathcal H}u(t,x)\,p^\nu_r(s)\,ds,& t\in(0,T],
\end{align}
and  $e^{r\mathcal H^\nu}u(t,x)=e^{r\mathcal H^\nu_0}u(t,x)=e^{r\mathcal H^\nu_0,\text{kill}}u(t,x)= u(t,x),$ for $r=0$.
\end{definition}
\begin{remark}\label{rmk:on_HOSG}
\begin{enumerate}[(i)]
	\item If $u(0)=0$, then $e^{r\mathcal H^\nu_0}u(t,x)=e^{r\mathcal H^\nu_0,\text{kill}}u(t,x)$, and note that for each $r>0$, $B((0,T]\times\Omega)$ is invariant under $e^{r\mathcal H^\nu_0,\text{kill}}$.
	\item If $u$ is independent of time, then 
	$$e^{r\mathcal H^\nu_0}(u)(t,x)=\int_0^\infty e^{s\mathcal L}u(x)\,p^\nu_r(s)\,ds=\mathbf E\left[u\left(B^x_{S^\nu_r}\right)\right]
	$$ is independent of time.
	\end{enumerate}
\end{remark}

The next theorem shows that the operators in Definition \ref{def:Hs3} define Feller semigroups, it gives a pointwise representation for the generators on `nice' cores, and finally it connects the domains of the generators of $e^{r\mathcal H^\nu_0}$ and $e^{r\mathcal H^\nu_0,\text{kill}}$. These statements serve various purposes, but let us outline our main line of thinking. Our strategy is to reduce \eqref{preRLHO_nl} to \eqref{postRLHO} with an appropriate forcing term, as suggested by the simple Lemma \ref{lem:cap_to_mar} (here we use the generators pointwise representation). Hence we solve problem \eqref{postRLHO} in the framework of abstract resolvent equations (Theorem \ref{thm:generalised_solution}). To do so, we use Theorem \ref{thm:HO_SG}-(iv) to reduce problem \eqref{postRLHO} to the 0 initial condition version, easily solved by inverting $\mathcal H^{\nu,\text{kill}}_0$ (Lemma \ref{lem:sdg}). Moreover, Theorem \ref{thm:HO_SG} allows us to access Dynkin formula.   
\begin{theorem}\label{thm:HO_SG} Assume \ref{H0} and let $T\in (0,\infty)$. With the notation of Definition \ref{def:Hs} and Definition \ref{def:Hs3}:
\begin{enumerate}[(i)]
	\item The operators  $e^{r\mathcal H^\nu}$, $r\ge 0$ form a Feller semigroup on $C_{\infty}((-\infty,T]\times\Omega)$. We denote the generator of the semigorup by $\left(\LHM,\text{Dom}(\LHM)\right). $\\
Moreover, $\text{Dom}(\mathcal H)$ is a core for $\left(\LHM,\text{Dom}(\LHM)\right)$, and for  $g\in \text{Dom}(\mathcal H)$
\begin{equation}
\LHM g(t,x)=\LHMp g(t,x):=\int_0^\infty\left(e^{r\mathcal H}g(t,x)-g(t,x)\right) \nu(r)\,dr.
\label{eq:LHM_c}
\end{equation}

	\item The operators $e^{r\mathcal H^\nu_0}$, $r\ge 0$ form a Feller semigroup on  $C_{\partial\Omega}([0,T]\times\Omega)$. We denote the generator of the semigorup  by $\left(\LH ,\text{Dom}(\LH )\right). $\\
Moreover, $\text{Dom}(\mathcal H_0)$ is a core for $(\LH,\text{Dom}(\LH))$, and  
\begin{equation}
\LH g(t,x)=\LHp g(t,x),\quad\text{for } g\in \text{Dom}(\mathcal H_0),
\label{eq:LH_c}
\end{equation}
where
\begin{equation*}
\LHp g(t,x):=\int_0^t\left(e^{r\mathcal H}g(t,x)-g(t,x)\right) \nu(r)\,dr+\int_t^\infty\left(e^{r\mathcal L}g(0,x)-g(t,x)\right) \nu(r)\,dr.
\end{equation*}

\item The operators  $e^{r\mathcal H^\nu_0,\text{kill}}$, $r\ge 0$ form a Feller semigroup on  $C_{0,\partial\Omega}([0,T]\times\Omega)$. We denote the generator of the semigorup  by $(\LHkill ,\text{Dom}(\LHkill )). $\\
Moreover,  $\text{Dom}(\mathcal H_0^{\text{kill}})$ is a core for $(\LHkill,\text{Dom}(\LHkill))$, and 
\[
\LHkill g= \LHp g,\quad\text{for } g\in \text{Dom}(\mathcal H_0^{\text{kill}}).
\]

\item In addition, it holds that $\LH  = \LHkill $  on $\text{Dom}(\LHkill )$, and
\begin{equation}\label{eq:dom_relation}
\text{Dom}(\LHkill ) = \text{Dom}(\LH )\cap \{f(0)=0\}.
\end{equation}

\end{enumerate}
\end{theorem}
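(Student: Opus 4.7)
The plan is to recognize each of the three operator families in Definition \ref{def:Hs3} as a Bochner subordination of the corresponding Feller semigroup in Definition \ref{def:HSG} by the L\'evy subordinator $S^\nu$, then apply the classical Phillips subordination theorem \cite[Theorem 4.3.5]{jacob2001pseudo}, and finally, for part (iv), exhibit $e^{r\LHkill}$ as the restriction of $e^{r\LH}$ to a closed invariant subspace. Part (i) is already in that subordinated form via \eqref{eq:SGH}. For (ii) and (iii) I would unwind $e^{s\mathcal{H}_0}u(t,x)= e^{s\mathcal{L}}u((t-s)\vee 0,x)$ via the case split $s\le t$ (equal to $e^{s\mathcal{H}}u(t,x)$) versus $s>t$ (equal to $e^{s\mathcal{L}}u(0,x)$), and substitute into $\int_0^\infty e^{s\mathcal{H}_0}u(t,x)\,p^\nu_r(s)\,ds$ to recover \eqref{eq:SGHO}. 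The killed case is the same computation restricted to $C_{0,\partial\Omega}$, which is readily checked to be invariant under $e^{s\mathcal{H}_0}$.

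Phillips' theorem then yields, for each of the three original Feller generators $\mathcal{G}$, the Feller property of its subordination, the fact that $\text{Dom}(\mathcal{G})$ is a core for the subordinated generator, and the pointwise formula $\mathcal{G}^\nu g=\int_0^\infty (e^{s\mathcal{G}}g-g)\nu(s)\,ds$ on that core. For (i) this is exactly \eqref{eq:LHM_c}. For (ii) the same $s\lessgtr t$ split separates the integral into the two pieces of $\LHp g$; integrability of the tail $\int_t^\infty \nu(s)\,ds$ for $t>0$ follows from \ref{H0} since $\int_0^\infty(s\wedge 1)\nu(s)\,ds<\infty$ controls both $\int_t^1\nu$ and $\int_1^\infty\nu$. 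Part (iii) is identical, with the term $e^{s\mathcal{L}}g(0,x)$ vanishing because $g(0,\cdot)\equiv 0$.

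For part (iv) I would first note that $C_{0,\partial\Omega}$ is closed in $C_{\partial\Omega}$ and invariant under $e^{r\LH}$: evaluating \eqref{eq:SGHO} at $t=0$ kills both integrals when $u(0,\cdot)=0$. By Remark \ref{rmk:on_HOSG}(i) the restriction of $e^{r\LH}$ to $C_{0,\partial\Omega}$ is precisely $e^{r\LHkill}$. The standard fact that the generator of a $C_0$-semigroup restricted to a closed invariant subspace $Y$ is the part $\mathcal{A}|_Y$ of the original generator, with $\text{Dom}(\mathcal{A}|_Y)=\{f\in\text{Dom}(\mathcal{A})\cap Y:\mathcal{A}f\in Y\}$, then yields $\LH=\LHkill$ on $\text{Dom}(\LHkill)$ and one inclusion in \eqref{eq:dom_relation}. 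The remaining inclusion is the only step needing genuine attention: for $f\in\text{Dom}(\LH)$ with $f(0,\cdot)=0$ I must verify that $(\LH f)(0,\cdot)\equiv 0$ automatically, which I would obtain from the pointwise limit $(\LH f)(0,x)=\lim_{r\downarrow 0}r^{-1}(e^{r\LH}f(0,x)-f(0,x))=0$, using the same $t=0$ invariance computation.

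The main obstacle I anticipate is bookkeeping rather than technical: reconciling the abstract Phillips formula with the explicit splitting in $\LHp g$ requires careful tracking of the $s\lessgtr t$ case, and the restriction argument in (iv) is a routine application of $C_0$-semigroup theory once the invariance of $C_{0,\partial\Omega}$ is in hand.
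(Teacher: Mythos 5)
Your treatment of parts (i)--(iii) is exactly the paper's: both identify \eqref{eq:SGH}, \eqref{eq:SGHO}, \eqref{eq:SGHOkill} as Bochner subordinations of the three Feller semigroups of Definition \ref{def:HSG} and invoke \cite[Theorem 4.3.5 and Proposition 4.3.7]{jacob2001pseudo} for the Feller property, the core statement, and the pointwise generator formula; the $s\lessgtr t$ split you describe is precisely how \eqref{eq:SGHO} and the two-piece formula for $\LHp$ arise. Where you genuinely diverge is part (iv). The paper proves the inclusion $\supset$ in \eqref{eq:dom_relation} by writing $g\in\text{Dom}(\LH)$ through its resolvent representation $g=\int_0^\infty e^{-r\lambda}e^{r\mathcal H^\nu_0}g_\lambda\,dr$, using Remark \ref{rmk:on_HOSG}-(ii) to identify $g(0)$ as the resolvent of $g_\lambda(0)$, and concluding that $g-g(0)$ is the resolvent of $g_\lambda-g_\lambda(0)\in C_{0,\partial\Omega}([0,T]\times\Omega)$, hence lies in $\text{Dom}(\LHkill)$; the identity $\LH=\LHkill$ on $\text{Dom}(\LHkill)$ is then read off by equating resolvent equations. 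You instead invoke the standard characterization of the generator of a semigroup restricted to a closed invariant subspace $Y$ as the part $\mathcal A|_Y$ with $\text{Dom}(\mathcal A|_Y)=\{f\in\text{Dom}(\mathcal A)\cap Y:\mathcal Af\in Y\}$, and then close the gap by the pointwise observation that $e^{r\mathcal H^\nu_0}f(0,x)=0$ whenever $f(0,\cdot)=0$, so that $(\LH f)(0,\cdot)=0$ automatically and the extra condition $\mathcal Af\in Y$ is vacuous. Both arguments are correct and of comparable length; yours makes the structural reason for \eqref{eq:dom_relation} (invariance of $C_{0,\partial\Omega}$ plus automatic vanishing of the generator at $t=0$) more transparent, while the paper's resolvent computation has the minor advantage of staying entirely within the Hille--Yosida toolkit already used elsewhere (e.g.\ in Lemma \ref{lem:sdg}) and not requiring the part-of-the-generator theorem as an external input.
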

\begin{proof} 
The statements  (i), (ii) and (iii) are all consequences of \cite[Theorem 4.3.5 and Proposition 4.3.7]{jacob2001pseudo} along  with  preservation of positive functions and the contraction property, which are easily checked directly from the definitions \eqref{eq:SGH}, \eqref{eq:SGHO} and \eqref{eq:SGHOkill}, respectively.\\

iv) To prove \eqref{eq:dom_relation}, we note that the inclusion `$\subset$' is clear because,  $\text{Dom}(\LHkill )\subset C_{0,\partial\Omega}([0,T]\times\Omega)$, and the two semigroups \eqref{eq:SGHO} and \eqref{eq:SGHOkill} agree on $C_{0,\partial\Omega}([0,T]\times\Omega)$ by Remark \ref{rmk:on_HOSG}-(i). For the opposite inclusion `$\supset$', we show that 
$$
\text{if }g\in \text{Dom}(\LH ),\text{ then }g-g(0)\subset \text{Dom}(\LHkill ).
$$
Consider the resolvent representation for $g$ for a given $\lambda>0$ and $g_\lambda\in C_{\partial\Omega}([0,T]\times\Omega)$ given by
\[
g(t,x)=\int_0^\infty e^{-r\lambda} e^{r\mathcal H^\nu_0}g_\lambda (t,x)\,dr,
\]
 and 
\[
g(0,x)=\int_0^\infty e^{-r\lambda} e^{r\mathcal H^\nu_0}g_\lambda (0,x)\,dr=\int_0^\infty e^{-r\lambda} e^{r\mathcal H^\nu_0}(g_\lambda(0) )(t,x)\,dr,
\]
where we use Remark \ref{rmk:on_HOSG}-(ii). Then 
\begin{align*}
g(t,x)-g(0,x)= \int_0^\infty e^{-r\lambda} e^{r\mathcal H^\nu_0}(g_\lambda -g_\lambda(0))(t,x)\,dr\in \text{Dom}(\LHkill )
\end{align*} 
as $g_\lambda -g_\lambda(0)\in C_{0,\partial\Omega}([0,T]\times\Omega)$ and $e^{r\mathcal H^\nu_0}=e^{r\mathcal H^\nu_0,\text{kill}}$  on $C_{0,\partial\Omega}([0,T]\times\Omega)$.\\
We can now conclude equating resolvent equations, as for any $g\in \text{Dom}(\LHkill )$, for a positive $\lambda>0$ and a respective $g_\lambda\in  C_{0,\partial\Omega}([0,T]\times\Omega)$ 
\[
\LHkill  g = \lambda g-g_\lambda = \LH  g. 
\]
\end{proof}

\begin{remark}
Let us stress that Theorem \ref{thm:HO_SG}-(iv), although unsurprising,  is a vital technical ingredient for this work. This is because it allows to obtain uniqueness of our notion of a \emph{solution in the domain of the generator } for \eqref{postRLHO} (see the proof of Lemma \ref{lem:sdg}-(i)). Such notion of solution is our building block for weak solutions to \eqref{preRLHO_nl} in Section \ref{sec:weak}.
\end{remark}


\begin{example}
Concerning Theorem \ref{thm:HO_SG}, if $\mathcal L =\Delta,$ and $\Omega=\R^d $, then, using standard notation,
\begin{align*}
\text{Dom}(\mathcal H)&=C^{1,2}_\infty((-\infty,T]\times\R^d),\\
\text{Dom}(\mathcal H_0)&=C^{1,2}_{\infty}([0,T]\times\R^d), \\
\text{Dom}(\mathcal H_0^{\text{kill}})&=C^{1,2}_{\infty}([0,T]\times\R^d)\cap\{f(0)=0\}.
\end{align*} 
\end{example}
\begin{remark}
To see that $H^\nu u$ is well defined pointwise for  $u\in \text{Dom}(\mathcal H)$ one can use the general bound in Remark \ref{rmk:smallr} along with \ref{H0}.
\end{remark}

The proof of Theorem \ref{thm:HO_SG}-(i) guarantees that the next definition make sense.
\begin{definition}\label{def:L_Omega_alpha}
We denote by $(\mathcal L^{\nu},\text{Dom}(\mathcal L^{\nu}))$  the generator of the Feller semigroup 
$$
e^{r\mathcal L^{\nu}}(\cdot) :=\int_0^\infty e^{s\mathcal L}(\cdot) p^\nu_r(s)\,ds,\quad r>0,
$$ on $C_\infty(\Omega)$ induced by the Feller process $r\mapsto B_{S^\nu_r}$. 
\end{definition}
\begin{remark}\label{rmk:lifetime}
The  life time of the Feller process $r\mapsto B_{S^\nu_r}$ is 
\[
\inf\{s>0: B^x_{S^\nu_s}\notin\Omega\}=\inf\{s>0: S^\nu_s \ge \tau_\Omega(x)\}=\tau_0(\tau_\Omega(x)),
\]
for each $x\in\Omega$, where the first equality follows by $B^x_{s}=B^x_{s\wedge \tau_\Omega(x)}$, and its independence with respect to $S^\nu_s$.
\end{remark}
We will later use the following simple lemma.

\begin{lemma}\label{lem:phi_0}
Suppose  $\phi_0\in \text{Dom}(\mathcal L^{\nu})$ and constantly extend $\phi_0(x)$ to $[0,T]$ for each $x\in\Omega$.  Then $\phi_0\in \text{Dom}(\LH)\subset C_{\partial\Omega}([0,T]\times\Omega)$ and
\[
\LH  \phi_0 =\mathcal L^{\nu}\phi_0.
\]
\end{lemma}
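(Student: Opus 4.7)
The plan is to reduce the statement to the one-dimensional generator identity for $\mathcal L^\nu$ via the time-independent representation of $e^{r\mathcal H^\nu_0}$ already recorded in Remark \ref{rmk:on_HOSG}-(ii).

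First I would verify the membership $\phi_0\in C_{\partial\Omega}([0,T]\times\Omega)=C_\infty([0,T]\times\Omega)$. This is immediate, since $\phi_0\in C_\infty(\Omega)$ by hypothesis $\phi_0\in\text{Dom}(\mathcal L^\nu)$, and a time-constant extension of a function vanishing at $\partial \Omega$ (in the one-point compactification sense) also vanishes at the appropriate boundary of $[0,T]\times\Omega$ and is trivially jointly continuous in $(t,x)$.

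Next, I would apply Remark \ref{rmk:on_HOSG}-(ii), which tells us that for a time-independent input the semigroup $e^{r\mathcal H^\nu_0}$ reduces to the spatial subordinated semigroup; explicitly, for each $r\ge 0$ and $(t,x)\in[0,T]\times\Omega$,
\[
e^{r\mathcal H^\nu_0}\phi_0(t,x) \;=\; \int_0^\infty e^{s\mathcal L}\phi_0(x)\,p^\nu_r(s)\,ds \;=\; e^{r\mathcal L^\nu}\phi_0(x),
\]
with the second equality being the definition of $e^{r\mathcal L^\nu}$ from Definition \ref{def:L_Omega_alpha}. Thus $e^{r\mathcal H^\nu_0}\phi_0$ is itself time-independent, and the difference quotient satisfies
\[
\frac{e^{r\mathcal H^\nu_0}\phi_0(t,x)-\phi_0(x)}{r} \;=\; \frac{e^{r\mathcal L^\nu}\phi_0(x)-\phi_0(x)}{r}.
\]

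Finally, since $\phi_0\in\text{Dom}(\mathcal L^\nu)$, the right-hand side converges to $\mathcal L^\nu\phi_0$ in $C_\infty(\Omega)$ as $r\downarrow 0$. Because both sides are independent of $t$, the supremum norm on $[0,T]\times\Omega$ coincides with the supremum norm on $\Omega$, so convergence in $C_\infty(\Omega)$ upgrades without any further work to convergence in $C_{\partial\Omega}([0,T]\times\Omega)$. This gives $\phi_0\in\text{Dom}(\mathcal H^\nu_0)$ and $\mathcal H^\nu_0\phi_0=\mathcal L^\nu\phi_0$, completing the proof. There is no real obstacle here; the only subtle point is ensuring that the norms on the two spaces agree on time-independent functions, which is transparent.
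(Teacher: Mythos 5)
Your proposal is correct and is essentially the paper's own argument: both reduce the difference quotient for $e^{r\mathcal H^\nu_0}$ on a time-independent function to that of $e^{r\mathcal L^\nu}$ via Remark \ref{rmk:on_HOSG}-(ii) and then invoke $\phi_0\in\text{Dom}(\mathcal L^\nu)$ for uniform convergence. The only difference is that you spell out the (trivial) membership in $C_{\partial\Omega}([0,T]\times\Omega)$ and the agreement of the supremum norms, which the paper leaves implicit.
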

\begin{proof}
This is straightforward, because 
\begin{align*}
r^{-1}\left(e^{r\mathcal H^\nu_0}(\phi_0)(t,x)-\phi_0(t,x)\right)& =r^{-1}\left( \int_0^\infty e^{s\mathcal L}\phi_0(x) p^\nu_r(s)\,ds-\phi_0(x)\right)\\
& =r^{-1}\left( e^{r\mathcal L^{\nu}}\phi_0(x) -\phi_0(x)\right)\to \mathcal L^{\nu}\phi_0,
\end{align*}
as $r\downarrow 0$, uniformly in both $t,$ and $x$. 
\end{proof}

\section{Generalised solution for time-independent initial condition }\label{sec:generalised}

We prove existence, uniqueness and stochastic representation for generalised solutions to the `Caputo-type' problem
	\begin{equation}\label{postRLHO}
\left\{
\begin{split}
\mathcal H^\nu_0 u(t,x)    &=-g, & \text{in }& (0,T]\times \Omega,\\ 
  u(0,x)&=\phi_0(x),  &\text{in }&\{0\}\times \Omega,
\end{split}
\right.
\end{equation}
under assumptions \ref{H0} and \ref{H1}.
In particular, we will obtain the Feynman-Kac formula 
\begin{equation}
u(t,x)=\mathbf E\left[\phi_0\left(B^x_{S^\nu_{\tau_0(t)}}\right)\mathbf1_{\{\tau_0(t)<\tau_0(\tau_\Omega(x))\}} \right] +\mathbf E\left[\int_0^{\tau_0(t)\wedge\tau_0(\tau_\Omega(x))}g\left(t-S^\nu_r,B^x_{S^\nu_r}\right) \,dr\right],
\label{eq:SRHO}
\end{equation}
for the solution to \eqref{postRLHO}.
\begin{remark}\label{rem:SRbry} Recalling Remark \ref{rmk:lifetime}, observe that if $g(\partial)= 0$ for $\partial$  the  cemetery state of $C_{\partial\Omega}([0,T]\times\Omega)$, then
\begin{align*}
\mathbf E\left[\int_0^{\tau_0(t)\wedge\tau_0(\tau_\Omega(x))}g\left(t-S^\nu_r,B^x_{S^\nu_r}\right) \,dr\right]&=\mathbf E\left[\int_0^{\tau_0(t)}g\left(t-S^\nu_r,B^x_{S^\nu_r \wedge \tau_\Omega(x)}\right) dr\right]\\
&=\int_0^\infty\mathbf E\left[ \mathbf 1_{\{t-S^\nu_r>0\}}g\left(t-S^\nu_r,B^x_{S^\nu_r}\right) \right]dr.
\end{align*}
Similarly, if $\phi_0(\partial)=0$, for $\partial$ the  cemetery state of $C_\infty(\Omega)$, then
\begin{align*}
\mathbf E\left[\phi_0\left(B^x_{S^\nu_{\tau_0(t)}}\right) \right]&= \mathbf E\left[\phi_0\left(B^x_{S^\nu_{\tau_0(t)}\wedge\tau_\Omega(x) }\right) \right]= \mathbf E\left[\phi_0\left(B^x_{S^\nu_{\tau_0(t)}}\right)\mathbf1_{\{\tau_0(t)<\tau_0(\tau_\Omega(x))\}}  \right].
\end{align*}
\end{remark}

\begin{remark}
Problem \eqref{postRLHO} formally corresponds to problem \eqref{preRLHO_nl} for time independent initial condition $\phi(t)=\phi_0$, in a similar way as Caputo and Marchaud evolution equations are related in \cite{T18}.
\end{remark}

We first assume some compatibility condition on the forcing term and the initial data in order to construct the following kind of strong solution.

\begin{definition}
The function $u$ is a  \emph{ solution in the domain of generator to } \eqref{postRLHO} if 
\begin{equation}
\LH  u=-g,\,\text{on }(0,T]\times\Omega,\quad u(0)=\phi_0,\quad u\in \text{Dom}(\LH ).
\label{eq:def_sdg}
\end{equation}
\end{definition}

\begin{lemma}\label{lem:sdg}
Assume \ref{H0}, and let  $g\in  C_{\partial\Omega}([0,T]\times\Omega)$ and
 $\phi_0\in \text{Dom}(\mathcal L^{\nu})$ such that $g+\mathcal L^{\nu}\phi_0\in  C_{0,\partial\Omega}([0,T]\times\Omega)$. 
\begin{enumerate}[(i)]
	\item Then there exists a unique solution in the domain of the generator to \eqref{postRLHO}.
\item  Moreover, the solution in the domain of the generator allows the stochastic representation \eqref{eq:SRHO}.
\end{enumerate}
\end{lemma}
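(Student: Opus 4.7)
\medskip
\noindent\textbf{Proof plan.}
The plan is to reduce \eqref{postRLHO} to a problem with zero initial condition and then invert the generator of the killing semigroup. Concretely, set $v := u - \phi_0$, viewing $\phi_0$ as time-independently extended. By Lemma \ref{lem:phi_0}, $\phi_0 \in \text{Dom}(\LH)$ with $\LH \phi_0 = \mathcal L^\nu \phi_0$, so $u$ is a solution in the domain of the generator to \eqref{postRLHO} if and only if $v \in \text{Dom}(\LH)$, $v(0,\cdot) = 0$, and $\LH v = -\tilde g$, where $\tilde g := g + \mathcal L^\nu\phi_0 \in C_{0,\partial\Omega}([0,T]\times\Omega)$ by hypothesis. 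Since $v(0,\cdot) = 0$, Theorem \ref{thm:HO_SG}-(iv) shows the latter is equivalent to $v \in \text{Dom}(\LHkill)$ with $\LHkill v = -\tilde g$. Thus the whole lemma reduces to inverting $\LHkill$ on $C_{0,\partial\Omega}([0,T]\times\Omega)$.

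\medskip
\noindent\textbf{Existence.}
The candidate is the resolvent at $\lambda = 0$, namely
\[
v(t,x) := \int_0^\infty e^{r\LHkill}\tilde g(t,x)\, dr.
\]
Using the definition \eqref{eq:SGHOkill}, the contraction property and the identification $\{S^\nu_r < t\} = \{r < \tau_0(t)\}$, one gets the uniform bound $|e^{r\LHkill}\tilde g(t,x)| \le \|\tilde g\|_\infty\, \mathbf P(r < \tau_0(t))$, and the integral converges absolutely and uniformly in $(t,x)$ because $\sup_{t \in (0,T]}\mathbf E[\tau_0(t)] \le \mathbf E[\tau_0(T)] < \infty$ by Remark \ref{rmk:dens}-(ii). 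Continuity of $v$ on $(0,T]\times\Omega$ and the required vanishing at $t=0$ and at the spatial cemetery follow from dominated convergence, so $v \in C_{0,\partial\Omega}([0,T]\times\Omega)$. The semigroup identity gives $e^{s\LHkill} v = \int_s^\infty e^{r\LHkill}\tilde g\, dr$, whence
\[
\frac{e^{s\LHkill}v - v}{s} = -\frac{1}{s}\int_0^s e^{r\LHkill}\tilde g\, dr \xrightarrow[s\downarrow 0]{} -\tilde g
\]
in supremum norm by strong continuity, so $v \in \text{Dom}(\LHkill)$ and $\LHkill v = -\tilde g$. Setting $u := v + \phi_0$ yields a solution in the domain of the generator.

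\medskip
\noindent\textbf{Uniqueness.}
If $u_1, u_2$ are two such solutions, then $w := u_1 - u_2$ satisfies $w(0,\cdot) = 0$ and $\LH w = 0$, so by Theorem \ref{thm:HO_SG}-(iv), $w \in \text{Dom}(\LHkill)$ with $\LHkill w = 0$. Then $e^{r\LHkill} w = w$ for all $r \ge 0$, but
\[
|e^{r\LHkill}w(t,x)| \le \|w\|_\infty\, \mathbf P(S^\nu_r < t) \xrightarrow[r\to\infty]{} 0,
\]
since $S^\nu_r \to \infty$ a.s. under \ref{H0}. Hence $w \equiv 0$.

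\medskip
\noindent\textbf{Stochastic representation.}
Expanding the definition \eqref{eq:SGHOkill} inside the integral defining $v$, Fubini gives
\[
v(t,x) = \mathbf E\!\left[\int_0^{\tau_0(t)\wedge \tau_0(\tau_\Omega(x))} \tilde g\bigl(t-S^\nu_r, B^x_{S^\nu_r}\bigr)\,dr\right].
\]
To recover \eqref{eq:SRHO} for $u = v + \phi_0$, it remains to show
\[
\phi_0(x) + \mathbf E\!\left[\int_0^{\tau_0(t)\wedge \tau_0(\tau_\Omega(x))} \mathcal L^\nu\phi_0\bigl(B^x_{S^\nu_r}\bigr)\,dr\right] = \mathbf E\!\left[\phi_0\bigl(B^x_{S^\nu_{\tau_0(t)}}\bigr)\mathbf 1_{\{\tau_0(t) < \tau_0(\tau_\Omega(x))\}}\right].
\]
This is Dynkin's formula applied to the Feller process with generator $(\LH, \text{Dom}(\LH))$ and test function $\phi_0 \in \text{Dom}(\LH)$, at the a.s.\ finite stopping time $\tau_0(t)\wedge \tau_0(\tau_\Omega(x))$: the left-hand side is the usual $\phi_0(t,x) + \mathbf E[\int_0^{\tau}\LH\phi_0\,ds]$, while on the right-hand side the event $\{\tau_0(\tau_\Omega(x)) \le \tau_0(t)\}$ contributes nothing because $\phi_0$ vanishes at the spatial cemetery, and on the complementary event the time coordinate is exactly $0$, on which $\phi_0$ is constantly extended.

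\medskip
\noindent\textbf{Main obstacle.}
The only delicate point is verifying that the resolvent integral at $\lambda = 0$ actually lies in $\text{Dom}(\LHkill)$ and satisfies $\LHkill v = -\tilde g$, which hinges on the finite expectation $\mathbf E[\tau_0(T)] < \infty$ and the hypothesis $\tilde g \in C_{0,\partial\Omega}$; everything else is a bookkeeping application of Theorem \ref{thm:HO_SG}-(iv), Lemma \ref{lem:phi_0} and Dynkin's formula.
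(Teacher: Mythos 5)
Your proposal is correct and follows essentially the same route as the paper: reduce to the zero-initial-condition problem via Lemma \ref{lem:phi_0} and Theorem \ref{thm:HO_SG}-(iv), invert $\LHkill$ by the resolvent at $\lambda=0$ using $\mathbf E[\tau_0(T)]<\infty$, and recover the $\phi_0$-term by Dynkin's formula. The only (immaterial) differences are that you verify the resolvent-at-zero properties and uniqueness by hand where the paper cites \cite[Theorem 1.1']{Dyn65}, and you apply Dynkin to the space-time semigroup rather than to the spatial process $r\mapsto B_{S^\nu_r}$ with generator $\mathcal L^\nu$ as the paper does.
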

\begin{proof} 

i) We first claim that
\[
(-\LHkill )^{-1} (g+\mathcal L^{\nu}\phi_0)= \int_0^\infty \left(\int_0^t e^{s\mathcal H}(g+\mathcal L^{\nu}\phi_0)\,p^\nu_r(s)\,ds\right)dr,
\]
is the  unique solution to the abstract evolution equation 
\begin{equation}
\LHkill  u=-g-\mathcal L^{\nu}\phi_0,\,\text{on }(0,T]\times\Omega,\quad u(0)=0,\quad u\in \text{Dom}(\LHkill).
\label{eq:def_sdg_0}
\end{equation}
 Let $f\in  C_{0,\partial\Omega}([0,T]\times\Omega)$. Then
\begin{align*}
(-\LHkill )^{-1}f(t,x)&=\int_0^\infty e^{r\mathcal H^\nu_0} f(t,x)   \,dr\\
&= \int_0^\infty \left(\int_0^t e^{s\mathcal H}f(t,x)\,p^\nu_r(s)\,ds\right)  \,dr\\
&\le \|f\|_\infty\int_0^\infty\int_0^tp^\nu_r(s)\,ds\,dr\\
&\le \|f\|_\infty\mathbf E[\tau_0(T)].
\end{align*}
Moreover, using $e^{r\mathcal H^\nu_0} f\in C_{0,\partial\Omega}([0,T]\times\Omega)$,  Dominated Convergence Theorem (DCT) proves that $(-\LHkill )^{-1}$ maps $C_{0,\partial\Omega}([0,T]\times\Omega)$ into itself. Then \cite[Theorem 1.1']{Dyn65}
 proves the claim.\\
Recall that  by Theorem \ref{thm:HO_SG}-(iv) 
$$
\LH  \tilde u = \LHkill  \tilde u, \quad\text{if }\tilde u\in \text{Dom}(\LHkill)=\text{Dom}(\LH )\cap \{f(0)=0\}. 
$$ 
 It is now enough to show that $\tilde u=u-\phi_0$ is a solution to \eqref{eq:def_sdg_0} if and only if $u$ is a solution  to \eqref{eq:def_sdg}. For the `only if' direction, define 
\begin{equation*}
u:= \tilde u+\phi_0.
\label{eq:}
\end{equation*}
Then  $u\in \text{Dom}(\LH )$ as $\tilde u\in \text{Dom}(\LH )$  by Theorem \ref{thm:HO_SG}-(iv) and $\phi_0\in \text{Dom}(\LH )$ by Lemma \ref{lem:phi_0}, and $u$ solves 
$$
\LH  (\tilde u+\phi_0)= \LH  \tilde u+\mathcal L^{\nu} \phi_0=-g,
$$
along with $u(0)=\phi_0$. The `if' direction is similar and omitted. \\

ii) Fix $(t,x)\in (0,T]\times\Omega$. First compute 
 \begin{align*}
\int_0^\infty e^{r\mathcal H^\nu_0,\text{kill}}(\mathcal L^{\nu}\phi_0)(t,x)\,dr=&\, \mathbf E\left[\int_0^\infty \int_0^\infty \mathcal L^{\nu}\phi_0(B^x_s)\mathbf 1_{\{t-s>0\}}p_r^\nu(s)\,ds\,dr\right ]\\
=&\, \mathbf E\left[\int_0^\infty  \mathcal L^{\nu}\phi_0(B^x_{S^\nu_r})\mathbf 1_{\{t-S_r^\nu>0\}}\,dr\right ]\\
=&\,  \mathbf E\left[\int_0^{\tau_0(t)} \mathcal L^{\nu}\phi_0(B^x_{S^\nu_r})\,dr\right],
\end{align*}
where we use $\{t-S_r^\nu>0\}=\{\tau_0(t)>r\}$, by the monotonicity of the subordinator $S^\nu$. By the integrability of  $\tau_0(t)$, we can  apply  Dynkin formula \cite[Corollary of Theorem 5.1]{Dyn65}  to obtain
$$
 \mathbf E\left[\int_0^{\tau_0(t)} \mathcal L^{\nu}\phi_0\left(B^x_{S^\nu_r}\right)\,dr\right]+\phi_0(x) =  \mathbf E\left[\phi_0\left(B^x_{S^\nu_{\tau_0(t)}}\right)\right].
$$
This proves that $u$ can be written as \eqref{eq:SRHO}.\\
\end{proof}

We now give another definition of solution as the pointwise limit of solutions in the domain of the generator. This allows us to drop the compatibility  condition on the data in Lemma \ref{lem:sdg}. We  pay  a price by assuming \ref{H1}. 

\begin{definition}\label{def:gen_sol}
Let $g\in L^\infty((0,T)\times\Omega)$ and let $\phi_0\in \text{Dom}(\mathcal L^{\nu})$. Then $u$ is a \emph{generalised solution to }\eqref{postRLHO} if 
\[
u=\lim_{n\to\infty}u_n,\quad\text{pointwise},
\]
where $\{u_n\}_{n\ge1}$ is the sequence of solutions in the domain of the generator to  \eqref{postRLHO} for respective forcing terms $\{g_n\}_{n\ge1} \in C_{\partial\Omega}([0,T]\times\Omega)$ such that $g_n (0)=\mathcal L^{\nu}\phi_0 $ for all $n\ge1$, $g_n\to g$ bpw a.e..
\end{definition}

\begin{theorem}\label{thm:generalised_solution}
Assume \ref{H0}, \ref{H1}  and let $g\in L^\infty((0,T)\times\Omega)$, $\phi_0\in \text{Dom}(\mathcal L^{\nu})$. Then there exist a unique generalised solution to \eqref{postRLHO}. Moreover the generalised solution allows the stochastic representation \eqref{eq:SRHO}.
\end{theorem}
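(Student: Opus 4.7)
My plan is to read each solution in the domain of the generator $u_n$ through the Feynman-Kac formula of Lemma \ref{lem:sdg} and pass to the limit directly inside \eqref{eq:SRHO}, exploiting that assumption \ref{H1} together with Remark \ref{rmk:dens}(i) supplies a jointly Lebesgue-absolutely-continuous occupation measure for $r\mapsto(S^\nu_r, B^x_{S^\nu_r})$. This simultaneously yields existence and the stochastic representation; uniqueness then follows at once, since the limit depends only on the data $g, \phi_0$.

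First I would construct an admissible sequence $\{g_n\}\subset C_{\partial\Omega}([0,T]\times\Omega)$ satisfying the compatibility condition of Lemma \ref{lem:sdg} at $t=0$, with $g_n\to g$ bpw a.e.\ and a uniform bound $\|g_n\|_\infty\leq C(\|g\|_\infty+\|\mathcal L^\nu\phi_0\|_\infty)$. A standard mollify-and-truncate does the job: mollify $g$ in the spatial variable, multiply by a spatial cutoff to land in $C_{\partial\Omega}$, and blend with the time-constant extension of $\pm\mathcal L^\nu\phi_0$ via a cutoff $\rho_n(t)$ supported in $[0,1/n]$ with $\rho_n(0)=1$. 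Since $\phi_0\in\text{Dom}(\mathcal L^\nu)$ gives $\mathcal L^\nu\phi_0\in C_\infty(\Omega)$, the resulting $g_n$ lies in $C_{\partial\Omega}$. Lemma \ref{lem:sdg} then produces a unique $u_n$ coinciding with the right-hand side of \eqref{eq:SRHO} with $g_n$ in place of $g$.

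The heart of the argument is the limit passage. The first summand of \eqref{eq:SRHO} does not depend on $n$, so I focus on the second. Using independence of $S^\nu$ and $B^x$, the density $p^\nu_r$ of $S^\nu_r$ from Remark \ref{rmk:dens}(i), and the transition density $p^\Omega_s(x,\cdot)$ from \ref{H1} (which already encodes killing at $\tau_\Omega(x)$), I would rewrite
\begin{equation*}
\mathbf E\left[\int_0^{\tau_0(t)\wedge\tau_0(\tau_\Omega(x))} g_n(t-S^\nu_r, B^x_{S^\nu_r})\,dr\right] = \int_0^\infty\!\!\int_0^t\!\!\int_\Omega g_n(t-s,y)\,p^\nu_r(s)\,p^\Omega_s(x,y)\,dy\,ds\,dr,
\end{equation*}
where the constraint $\{r<\tau_0(t)\}=\{S^\nu_r<t\}$ becomes the bound $s<t$ after conditioning on $S^\nu_r=s$. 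The kernel $p^\nu_r(s)\,p^\Omega_s(x,y)$ is absolutely continuous on $[0,\infty)\times[0,t]\times\Omega$ with total mass $\mathbf E[\tau_0(t)]\leq\mathbf E[\tau_0(T)]<\infty$ by Remark \ref{rmk:dens}(ii), so $\|g_n\|_\infty\,p^\nu_r(s)\,p^\Omega_s(x,y)$ is an integrable majorant. Dominated convergence then yields pointwise convergence of $u_n(t,x)$ to the right-hand side of \eqref{eq:SRHO} with $g$ in place of $g_n$.

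The main obstacle is precisely this last step: upgrading the abstract expectation to an integral against a Lebesgue-absolutely-continuous measure so that bpw a.e.\ convergence of $g_n$ (as opposed to uniform) transfers through. This is where \ref{H1} is essential --- without a density for $B^x_s$, the occupation measure of $r\mapsto(S^\nu_r, B^x_{S^\nu_r})$ could charge a Lebesgue null set on which $g_n$ disagrees with $g$, and the argument would collapse. Once the limit is identified with \eqref{eq:SRHO}, uniqueness of the generalised solution is immediate, because every admissible approximating sequence produces the same pointwise limit.
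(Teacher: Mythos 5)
Your proposal is correct and follows essentially the same route as the paper's proof: write each $u_n$ via the Feynman--Kac formula of Lemma \ref{lem:sdg}, use \ref{H1} and Remark \ref{rmk:dens}-(i) to express the inhomogeneous term as an integral against the density $p^\Omega_s(x,y)\,p^\nu_r(s)$, and pass to the limit by dominated convergence with the majorant $\sup_n\|g_n\|_\infty\,\mathbf 1_{\{t>s\}}p^\Omega_s(x,y)\int_0^\infty p^\nu_r(s)\,dr$ of total mass $\mathbf E[\tau_0(T)]<\infty$, with uniqueness following from independence of the approximating sequence. Your explicit construction of an admissible sequence $\{g_n\}$ is a harmless addition the paper leaves implicit.
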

\begin{proof}
Take a sequence $\{g_n\}_{n\ge1}$ as in Definition \ref{def:gen_sol}. Then the respective solution in the domain of the generator $u_n$ allows the representation \eqref{eq:SRHO}, for $g\equiv g_n$. Fix $(t,x)\in(0,T]\times\Omega$. By assumption \ref{H1}, Remark \ref{rmk:dens}-(i), Remark \ref{rem:SRbry}, and independence of $S^\nu_r$ and $B^x_r$, we can rewrite the second term in \eqref{eq:SRHO} as
\begin{align*}
F(g_n):&=\int_0^\infty \left( \int_\Omega \int_0^\infty \mathbf 1_{\{t-s>0\}}g_n(t-s,y)p^\Omega_s(x,y)\, p^\nu_r(s)\,ds\,dy  \right)dr\\
&= \int_\Omega \int_0^\infty g_n(t-s,y)\left( \mathbf 1_{\{t-s>0\}} p^\Omega_s(x,y)\,\int_0^\infty p^\nu_r(s)\,dr\right)ds\,dy.
\end{align*}
By DCT, $F(g_n)\to F(g)$, as $n\to\infty$, using the  dominating function 
\[
(s,y)\mapsto\sup_n\|g_n\|_\infty \mathbf1_{\{t>s\}}p^\Omega_s(x,y) \int_0^\infty p^\nu_r(s)\,dr,
\] 
given that $F(|g_n|)\le \sup_n\|g_n\|_\infty \mathbf E[\tau_0(t)]$. Hence a generalised solution exists and it permits the stochastic representation \eqref{eq:SRHO}. Conclude observing that independence of the approximating sequence proves uniqueness.

\end{proof}

\begin{remark}\label{rem:convergenceSR}
By definition, a sequence $u_n$ of solutions in the domain of the generator converges pointwise to the generalised solution $u$ on $(0,T]\times\Omega$.  Moreover, by the stochastic representation \eqref{eq:SRHO}, 
\[
\sup_n \|u_n\|_{B([0,T]\times\Omega)} \le \|\phi_0\|_{B(\Omega)} + \sup_n\|g_n\|_{B([0,T]\times\Omega)} \mathbf E[\tau_0(T)] <\infty,
\]
where each $g_n$ is the data of the solution in the domain of the generator $u_n$.
\end{remark}

\begin{remark} 
We refer to Example \ref{ex:spati} for possible choices of domain $\Omega$ and generator $(\mathcal L,\text{Dom}(\mathcal L))$. 
\end{remark}

We now show that the fundamental solution that defines \eqref{eq:SR} allows a density with respect to Lebesgue measure.

\begin{lemma} \label{lem:densitystau}
Assume \ref{H0}. Then for each $t>0$, the random variable $S^\nu_{\tau_0(t)}-t$ allows a density supported on $(0,\infty)$, and we can write the density for almost every  $r\in(0,\infty)$ as
\[
p^{\nu,\tau_0(t)}(r) = \int_0^t\nu(y+r)\int_0^\infty p^\nu_s(t-y)\,ds\,dy.
\]
\end{lemma}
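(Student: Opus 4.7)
The plan is to apply the classical joint-law formula for the undershoot and overshoot of a subordinator at a first-passage time. Since $S^\nu$ is a driftless subordinator with infinite Lévy measure $\nu(r)\,dr$ (by \ref{H0}), a standard application of the compensation formula for the Poisson point process of jumps \cite[III, Proposition 2]{bertoin} yields
\[
\mathbf{P}\bigl(S^\nu_{\tau_0(t)-}\in ds,\ S^\nu_{\tau_0(t)}-t\in dr\bigr)\;=\;U(ds)\,\nu(r+t-s)\,dr,\qquad 0\le s<t,\ r>0,
\]
where $U(ds):=\int_0^\infty \mathbf{P}(S^\nu_\sigma\in ds)\,d\sigma$ is the potential measure of $S^\nu$.

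First I would record that, by Remark \ref{rmk:dens}(i) and Fubini--Tonelli, $U$ is absolutely continuous on $(0,\infty)$ with density $u(s)=\int_0^\infty p^\nu_\sigma(s)\,d\sigma$, and that $\int_0^t u(s)\,ds=\mathbf{E}[\tau_0(t)]<\infty$ by Remark \ref{rmk:dens}(ii), which makes all the integrals below well-defined. Second, marginalising the overshoot and making the substitution $y=t-s$ in the displayed joint law gives
\[
\mathbf{P}(S^\nu_{\tau_0(t)}-t\in dr)\;=\;\left(\int_0^t \nu(y+r)\,u(t-y)\,dy\right)dr\;=\;\left(\int_0^t \nu(y+r)\int_0^\infty p^\nu_\sigma(t-y)\,d\sigma\,dy\right)dr,
\]
which is exactly the claimed formula.

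Finally, the support is contained in $(0,\infty)$: since $S^\nu$ has no drift and infinite Lévy measure, it is a pure-jump strictly increasing process, so every level is crossed at a jump time, giving $\mathbf{P}(S^\nu_{\tau_0(t)}=t)=0$. The only real subtlety is citing the undershoot--overshoot identity in the precise form above (with both $U$ and $\nu$ absolutely continuous); once that classical fact is invoked the statement reduces to a one-line Fubini computation plus a short support argument.
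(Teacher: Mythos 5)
Your proof is correct. The paper's own ``proof'' is a one-line deferral to \cite[Proposition 3.13]{DTZ18} (performed without the spatial process), and the computation behind that reference is exactly the one you carry out --- the compensation-formula joint law of the undershoot and overshoot $\mathbf P(S^\nu_{\tau_0(t)-}\in ds,\ S^\nu_{\tau_0(t)}-t\in dr)=U(ds)\,\nu(t-s+r)\,dr$, marginalised over the undershoot using the absolute continuity of the potential measure $U$ (via Remark \ref{rmk:dens}-(i)) and the integrability $U([0,t])=\mathbf E[\tau_0(t)]<\infty$, together with the absence of creeping for a driftless subordinator with infinite L\'evy measure, which you correctly invoke both for the support claim and, implicitly, to ensure the jump-crossing formula accounts for the full law --- so your argument is a self-contained version of the paper's, with no gaps.
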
 
\begin{proof}
This follows by performing the proof of \cite[Proposition 3.13]{DTZ18} in the simpler setting without the spatial process.\\
\end{proof}

\begin{lemma}\label{lem:densitySR} Assume \ref{H0} and \ref{H1}. Suppose  $\phi \in L^\infty((-\infty,0)\times\Omega)$ and $g\in L^\infty((0,\infty)\times\Omega)$. Then for $t>0$, $x\in \Omega$ 
\begin{align}\label{eq:3.5}
\mathbf E\left[\phi \left(t-S^\nu_{\tau_0(t)},B^x_{S^\nu_{\tau_0(t)}}\right)\mathbf1_{\{\tau_0(t)<\tau_0(\tau_\Omega(x))\}} \right]&=\,\int_0^\infty\int_\Omega\phi (-r,y)\left( p^\Omega_{t+r}(x,y)p^{\nu,\tau_0(t)}(r)\right)dy\,dr,
\intertext{and} \nonumber
\mathbf E\left[\int_0^{\tau_0(t)\wedge\tau_0(\tau_\Omega(x))}g\left(t-S^\nu_r,B^x_{S^\nu_r}\right) \,dr\right]&=  \int_\Omega \int_0^t g(t-s,y) \left(p^\Omega_s(x,y)\,\int_0^\infty p^\nu_r(s)\,dr\right)ds\,dy.
\end{align}
\end{lemma}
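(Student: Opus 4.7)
The plan is to derive both identities from the independence of $S^\nu$ and $B^x$ combined with Fubini and the two density results already available: the spatial (sub-probability) density $p^\Omega_s(x,y)$ from assumption \ref{H1}, and the overshoot density $p^{\nu,\tau_0(t)}(r)$ from Lemma \ref{lem:densitystau}. No new analytic input beyond bookkeeping with conditional expectations should be required.

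For the second equation, the argument essentially reproduces the computation of $F(g_n)$ in the proof of Theorem \ref{thm:generalised_solution}. First, by Remark \ref{rem:SRbry} (equivalently, using the convention $B^x_s=\partial$ for $s\ge \tau_\Omega(x)$ together with $g$ vanishing at $\partial$ because $g$ is defined only on $\Omega$), rewrite the expectation as
$$\int_0^\infty \mathbf E\!\left[\mathbf 1_{\{t-S^\nu_r>0\}}\,g(t-S^\nu_r,B^x_{S^\nu_r})\,\mathbf 1_{\{S^\nu_r<\tau_\Omega(x)\}}\right]dr.$$
Conditioning on $S^\nu_r=s$ via its density $p^\nu_r(s)$ and then applying \ref{H1} to integrate out $B^x_s$ against $p^\Omega_s(x,y)$, Fubini (justified by $\|g\|_\infty<\infty$ and $\mathbf E[\tau_0(t)]<\infty$, cf. Remark \ref{rmk:dens}-(ii)) rearranges the integrals into the displayed form, with the restriction $s<t$ coming from the indicator.

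For the first equation, the key preliminary is the identity $\{\tau_0(t)<\tau_0(\tau_\Omega(x))\}=\{S^\nu_{\tau_0(t)}<\tau_\Omega(x)\}$, which follows from Remark \ref{rmk:lifetime}. Setting $\sigma:=S^\nu_{\tau_0(t)}$, the random variable $\sigma$ is $\sigma(S^\nu)$-measurable and hence independent of $B^x$. By Lemma \ref{lem:densitystau}, $\sigma-t$ has density $p^{\nu,\tau_0(t)}(r)$ on $(0,\infty)$, so $\sigma$ has density $r\mapsto p^{\nu,\tau_0(t)}(r-t)$ on $(t,\infty)$. For each fixed $s=t+r>0$, assumption \ref{H1} gives
$$\mathbf E\!\left[\phi(t-s,B^x_s)\,\mathbf 1_{\{s<\tau_\Omega(x)\}}\right]=\int_\Omega \phi(t-s,y)\,p^\Omega_s(x,y)\,dy.$$
Integrating this against the law of $\sigma$ and substituting $s=t+r$ then produces the right-hand side of \eqref{eq:3.5}. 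Integrability is immediate from $\|\phi\|_\infty<\infty$ and $\int_0^\infty p^{\nu,\tau_0(t)}(r)\,dr\le 1$.

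The only (mild) subtlety is justifying the disintegration over $\sigma$ and the joint use of the two densities. This is a standard consequence of independence: the joint law of $(\sigma,B^x_{\sigma})$ factors, via conditioning on $\sigma(S^\nu)$, as the product of the marginal law of $\sigma$ with the transition kernel of $B^x$ evaluated at $\sigma$. All other steps are routine applications of Fubini whose hypotheses have already been verified in Remark \ref{rmk:dens}-(ii) and Lemma \ref{lem:densitystau}; I do not anticipate a hard step in this proof.
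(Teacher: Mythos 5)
Your proposal is correct and follows essentially the same route as the paper: handle the indicator via the lifetime identity of Remark \ref{rmk:lifetime} / Remark \ref{rem:SRbry} (extending the data by $0$ to the cemetery), then use independence of $S^\nu$ and $B^x$ together with the overshoot density of Lemma \ref{lem:densitystau} and the spatial density from \ref{H1}, with Fubini justified by boundedness of the data and $\mathbf E[\tau_0(t)]<\infty$. The treatment of the inhomogeneous term by repeating the $F(g_n)$ computation from Theorem \ref{thm:generalised_solution} is exactly what the paper means by ``treated similarly.''
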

\begin{proof}
Extend  $\phi$ and $g$ to 0 on the appropriate cemetery state. Then, proceeding as in Remark \ref{rem:SRbry} and then using independence between $S^\nu_{\tau_0(t)}$ and $B^x_r$ along with Lemma \ref{lem:densitystau}
\begin{align*}
\mathbf E\left[\phi \left(t-S^\nu_{\tau_0(t)},B^x_{S^\nu_{\tau_0(t)}}\right)\mathbf1_{\{\tau_0(t)<\tau_0(\tau_\Omega(x))\}} \right]=&
\mathbf E\left[\phi \left(t-S^\nu_{\tau_0(t)},B^x_{S^\nu_{\tau_0(t)}}\right)\right]\\
=&\,\int_0^\infty\int_\Omega\phi (-r,y)\left( p^\Omega_{t+r}(x,y)p^{\nu,\tau_0(t)}(r)\,dy\right)dr. 
\end{align*}

The inhomogeneous term is treated similarly and we omit the computation.\\
\end{proof}

\begin{corollary}\label{cor:convergenceSR}
Assume \ref{H0} and \ref{H1}. Let $f_n,f\in L^\infty ((0,\infty)\times\Omega)$, $\phi_n,\phi\in L^\infty ((-\infty,0)\times\Omega) $, for  $n\in\mathbb N$, such that $f_n\to f$ and $\phi_n\to\phi$ bpw a.e. as $n\to\infty$.\\
Then, as $n\to\infty$ 
\[
u_n\to u \quad \text{bpw a.e. on }(-\infty,T)\times\Omega,
\]
 where $u_n$ is defined as \eqref{eq:SR} for $f\equiv f_n$, $\phi\equiv\phi_n$ on $(0,T)\times\Omega$, and as $\phi_n$ on $(-\infty,0)\times\Omega$, and $u$ is defined as \eqref{eq:SR} for $f\equiv f$, $\phi\equiv\phi$ on $(0,T)\times\Omega$, and as $\phi$ on $(-\infty,0)\times\Omega$.
\end{corollary}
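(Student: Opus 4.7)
The plan is to reduce everything to the Dominated Convergence Theorem by first passing from expectations to explicit integrals against densities, using Lemma \ref{lem:densitySR}. The statement naturally splits according to the sign of the time variable.

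First, on $(-\infty,0)\times\Omega$ both $u_n$ and $u$ are defined to equal $\phi_n$ and $\phi$ respectively, so the bpw a.e.\ convergence on this region is precisely the assumed bpw a.e.\ convergence $\phi_n\to \phi$. So we only need to argue on $(0,T)\times\Omega$, where both terms of \eqref{eq:SR} are active.

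On $(0,T)\times\Omega$, I would apply Lemma \ref{lem:densitySR} to each $u_n$ and to $u$, rewriting the two expectations as Lebesgue integrals
\begin{align*}
\mathbf E\bigl[\phi_n(t-S^\nu_{\tau_0(t)},B^x_{S^\nu_{\tau_0(t)}})\mathbf 1_{\{\tau_0(t)<\tau_0(\tau_\Omega(x))\}}\bigr]
&=\int_0^\infty\int_\Omega \phi_n(-r,y)\,p^\Omega_{t+r}(x,y)\,p^{\nu,\tau_0(t)}(r)\,dy\,dr,\\
\mathbf E\Bigl[\int_0^{\tau_0(t)\wedge\tau_0(\tau_\Omega(x))} f_n(t-S^\nu_r,B^x_{S^\nu_r})\,dr\Bigr]
&=\int_\Omega\int_0^t f_n(t-s,y)\,p^\Omega_s(x,y)\int_0^\infty p^\nu_r(s)\,dr\,ds\,dy,
\end{align*}
and similarly for $u$. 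Since $\phi_n\to\phi$ and $f_n\to f$ pointwise a.e., the integrands converge a.e.\ for each fixed $(t,x)$.

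For the DCT step, the definition of bpw convergence gives $M_\phi:=\sup_n\|\phi_n\|_\infty<\infty$ and $M_f:=\sup_n\|f_n\|_\infty<\infty$. The natural dominating functions are therefore
\[
(r,y)\mapsto M_\phi\, p^\Omega_{t+r}(x,y)\,p^{\nu,\tau_0(t)}(r),\qquad
(s,y)\mapsto M_f\, p^\Omega_s(x,y)\int_0^\infty p^\nu_r(s)\,dr,
\]
which are integrable because the first integrates to at most $M_\phi\,\mathbf P[\tau_0(t)<\tau_0(\tau_\Omega(x))]\le M_\phi$, and the second to at most $M_f\,\mathbf E[\tau_0(t)]\le M_f\,\mathbf E[\tau_0(T)]<\infty$ by Remark \ref{rmk:dens}-(ii). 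Applying DCT to each term and summing gives $u_n(t,x)\to u(t,x)$ for each fixed $(t,x)\in (0,T)\times\Omega$. To finally upgrade this pointwise conclusion to the bpw a.e.\ statement, I would note that the same density representations bound $\|u_n\|_{L^\infty((0,T)\times\Omega)}\le M_\phi+M_f\,\mathbf E[\tau_0(T)]$ uniformly in $n$, which combined with the pointwise convergence yields bpw convergence on $(0,T)\times\Omega$. Together with the trivial convergence on $(-\infty,0)\times\Omega$, this finishes the argument.

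I do not anticipate a real obstacle: the only mildly delicate point is writing down integrable dominants, but Lemma \ref{lem:densitySR} together with the integrability of $\tau_0(t)$ (Remark \ref{rmk:dens}-(ii)) handles this at once.
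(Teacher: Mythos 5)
Your argument is correct and is exactly the paper's proof, merely written out in full: the paper likewise invokes Lemma \ref{lem:densitySR} to pass to integrals against densities and then applies DCT using the uniform bounds from bpw convergence together with $\mathbf E[\tau_0(T)]<\infty$. The additional details you supply (the trivial region $t<0$, the explicit dominants, and the uniform $L^\infty$ bound on $u_n$) are all sound.
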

\begin{proof} This is a straightforward application of DCT given Lemma \ref{lem:densitySR} and $\mathbf E[\tau_0(T)]<\infty$.
\end{proof}

\begin{example}
\begin{enumerate}[(i)]
	\item If $\phi=\phi_0$ does not depend on time, then \eqref{eq:3.5} equals
\begin{align*}
\mathbf E\left[\phi_0\left(B^x_{S^\nu_{\tau_0(t)}}\right)\mathbf1_{\{\tau_0(t)<\tau_0(\tau_\Omega(x))\}} \right] &=\int_\Omega\phi_0(y)\left(\int_0^\infty p^\Omega_{t+r}(x,y)p^{\nu,\tau_0(t)}(r)\,dr\right)dy,
\end{align*}
where $p^\Omega$ can be the density of any of the Feller processes listed in Example \ref{ex:spati}.

\item If $S^\nu=S^\alpha$, the $\alpha$-stable subordinator, $\alpha\in(0,1)$, then \cite[Formula (5.12)]{JKMS12}
\[
p^{\nu,\tau_0(t)}(r)= \int_0^t\frac{(y+r)^{-1-\alpha}}{|\Gamma(-\alpha)|}\frac{(t-y)^{\alpha-1}}{\Gamma(\alpha)} \,dy =t^{\alpha}\frac{r^{-\alpha}(t+r)^{-1}}{\Gamma(\alpha)\Gamma(1-\alpha)},
\]
and if in addition $B$ is a $d$-dimensional Brownian motion
\[
\mathbf E\left[\phi \left(t-S^\alpha_{\tau_0(t)},B^x_{S^\alpha_{\tau_0(t)}}\right) \right]= \int_0^\infty\int_{\R^d}\phi (-r,y)\left( e^{\frac{-|x-y|^2}{4(t+r)}} \frac{c_{d,\alpha}t^{\alpha}}{r^\alpha (t+r)^{d/2+1}}\right)dy\,dr,
\]
where $c_{d,\alpha}= \sin(\pi\alpha)/(2^d\pi^{d/2+1})$, so that $\mathcal L= \Delta$, the $d$-dimensional Laplacian. Moreover 
\[
\mathbf E\left[\int_0^{\tau_0(t)}g\left(t-S^\alpha_r,B^x_{S^\alpha_r}\right) \,dr\right]=  \int_{\R^d} \int_0^t g(t-s,y) \left( \frac{e^{\frac{-|x-y|^2}{4s}}}{(4\pi s)^{d/2}}\,\frac{s^{\alpha-1}}{\Gamma(\alpha)}\right)ds\,dy.
\]
\item If instead $B$ is a killed 1-$d$ Brownian motion for $\Omega=(0,\pi)$, then for $t>0$, $x\in(0,\pi)$
\begin{align*}
\mathbf E&\left[\phi \left(t-S^\alpha_{\tau_0(t)},B^x_{S^\alpha_{\tau_0(t)}}\right)\mathbf 1_{\{\tau_0(t)<\tau_0(\tau_\Omega(x))\}} \right]\\
&= \int_0^\infty\int_0^\pi\phi (-r,y)\left(\left(\sum_{n=1}^\infty e^{-n^2(t+r)}\sin(n x)\sin(n y)   \right)\frac{c_\alpha t^{\alpha}}{r^{\alpha}(t+r)}\right)dy\,dr. 
\end{align*}
where $c_\alpha = 2\sin(\alpha\pi)/\pi^2$, and $\{n^2,\sqrt{2/\pi}\sin(n\cdot)\}_{n\in\mathbb N}$ are the eigenvalues-eigenfunctions  of the Dirichlet Laplacian $\mathcal L =\Delta_\Omega$ \cite{Davies}.
\item If now $B$ is the subordination of the above killed Brownian motion by an independent $\beta$-stable L\'evy subordinator \cite{BV17,Bogdan}, so that
\[
\mathcal L u(x)=-(-\Delta_\Omega)^{\beta} u(x)= \frac{1}{|\Gamma(-\beta)|}\int_0^\infty \left(e^{r\Delta_\Omega}u(x)-u(x)\right)\frac{dr}{r^{1+\beta}},\quad \beta\in(0,1),
\]
then the homogeneous part of \eqref{eq:SR} reads, for $t>0$, $x\in(0,\pi)$,
\begin{align*}
\mathbf E&\left[\phi \left(t-S^\alpha_{\tau_0(t)},B^x_{S^\alpha_{\tau_0(t)}}\right)\mathbf 1_{\{\tau_0(t)<\tau_0(\tau_\Omega(x))\}} \right]\\
&= \int_0^\infty\int_0^\pi\phi (-r,y)\left(\left(\sum_{n=1}^\infty e^{-n^{2\beta}(t+r)}\sin(n x)\sin(n y)   \right)\frac{c_\alpha t^{\alpha}}{r^{\alpha}(t+r)}\right)dy\,dr. 
\end{align*}

\item If $B$ is the reflection at 0 of a 1-$d$ Brownian motion, then $\Omega=[0,\infty)$, $\mathcal L =\partial_x^2$ with Neumann boundary condition on $(0,T]\times\{0\}$, and for $t,x>0$
\begin{align*}
\mathbf E\left[\phi \left(B^x_{S^\alpha_{\tau_0(t)}}\right) \right]= \int_{0}^\infty\phi (y)\left(\int_0^\infty\left( e^{\frac{-|x-y|^2}{4(t+r)}}+e^{\frac{-|x+y|^2}{4(t+r)}}\right) \frac{c_{d,\alpha}t^{\alpha}}{r^\alpha (t+r)^{d/2+1}}\,dr\right)dy.
\end{align*}

\end{enumerate}
\end{example}

\section{Weak solution}\label{sec:weak}
In this section we prove that the stochastic representation \eqref{eq:SR} is a weak solution for problem \eqref{preRLHO_nl}, under the stronger assumption \ref{H1'} on the spatial semigroup $e^{\mathcal L}$. As outlined in Example \ref{ex:H10}, assumption  \ref{H1'}   applies to several operators with  Dirichlet and Neumann boundary conditions.\\
We introduce the notation 
\[
\langle f,g\rangle = \int_{-\infty}^T\int_\Omega f(t,x)g(t,x)\,dx\,dt.
\]


\begin{remark}\label{rmk:smallr}
If $f\in\text{Dom}(\mathcal L)$, then we use the symbol $\delta=\delta_f$ to denote a positive number such that  $\|e^{r\mathcal L}f-f\|_X\le r(\delta+\|\mathcal Lf\|_X)$ for all $r$ small. Then, as $\Omega$ is bounded, we can use the simple bound 
\[
\left|\int_\Omega \left(e^{r\mathcal L}f-f\right)(x)\,dx\right|\le rC,\quad\text{for all $r$ small,}
\] 
 where $C=\text{Leb}(\Omega)(\delta+\|\mathcal Lf\|_X)$.
\end{remark}

\begin{remark}Recall from Theorem \ref{thm:HO_SG}  that $\LHM$ and  $\LH$ denote abstract generators, meanwhile   $\LHMp$ and $\LHp$ denote pointwise defined formulas. 
\end{remark}
We define the adjoint operator 
\begin{align*}
H^{\nu,*} \varphi(t,x):=&\, \int_0^\infty \left(e^{r\mathcal L}\varphi(t+r,x)-\varphi(t,x)\right)\nu(r)\,dr.
\end{align*}
For our notion of weak solution we need the pairing $\langle u,H^{\nu,*}\varphi\rangle$ to be well defined for some test functions $\varphi's$ (see Definition \ref{def:weak}). Moreover, we want to allow  constant-in-time data $\phi$, so that the solution $u$ will be in $L^\infty((-\infty,T)\times\Omega)$, in general. To guarantee a well defined pairing and access dominated convergence arguments,  we now prove  that $H^{\nu,*}\varphi\in L^1((-\infty,T)\times\Omega)$.
\begin{lemma}\label{lem:phiL1}Assume \ref{H0} and \ref{H1'}.
If $\varphi=pq\in C_\infty^1(-\infty,T]\cdot \text{Dom}(\mathcal L)$ is such that $p, \partial_t p\in L^1(\R)$, then 
\[
(t,x)\mapsto\int_0^\infty \left|e^{r\mathcal L}\varphi(t+r,x)-\varphi(t,x)\right|\nu(r)\,dr \in  L^1((-\infty,T)\times\Omega),
\]
and in particular $H^{\nu,*}\varphi\in L^1((-\infty,T)\times\Omega)$.

\end{lemma}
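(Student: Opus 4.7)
The plan is to exploit the product structure $\varphi(t,x) = p(t)q(x)$ and decompose
\begin{equation*}
e^{r\mathcal L}\varphi(t+r,x) - \varphi(t,x) = p(t+r)\bigl[e^{r\mathcal L}q(x) - q(x)\bigr] + \bigl[p(t+r) - p(t)\bigr] q(x),
\end{equation*}
which separates the spatial increment from the temporal increment. I will bound the absolute value of each piece, integrate in $(t,x) \in (-\infty, T) \times \Omega$ via Fubini, and show that both resulting $r$-integrals are controlled by $\int_0^\infty (r \wedge 1)\nu(r)\,dr$, which is finite by \ref{H0}.

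For the spatial piece $|p(t+r)|\,|e^{r\mathcal L}q(x) - q(x)|$, since $q \in \text{Dom}(\mathcal L)$, Remark \ref{rmk:smallr} yields the pointwise bound $|e^{r\mathcal L}q(x) - q(x)| \le r(\delta + \|\mathcal L q\|_\infty)$ for small $r$, while the Feller contraction property gives $|e^{r\mathcal L}q(x) - q(x)| \le 2\|q\|_\infty$ uniformly in $r$. Since $\Omega$ is bounded under \ref{H1'}, integrating over $\Omega$ yields $\int_\Omega |e^{r\mathcal L}q(x) - q(x)|\,dx \le C_1 (r \wedge 1)$ for some constant $C_1$. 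Swapping the $t$- and $r$-integrals via Fubini, the $t$-integral contributes $\int_{-\infty}^T |p(t+r)|\,dt \le \|p\|_{L^1(\mathbb R)}$, and the remaining $r$-integral is controlled by $\|p\|_{L^1(\mathbb R)} C_1 \int_0^\infty (r \wedge 1)\nu(r)\,dr < \infty$.

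For the temporal piece $|p(t+r) - p(t)|\,|q(x)|$, Fubini and the fundamental theorem of calculus give
\begin{equation*}
\int_{-\infty}^T |p(t+r) - p(t)|\,dt \le \int_{-\infty}^T \int_t^{t+r} |p'(s)|\,ds\,dt \le r\|p'\|_{L^1(\mathbb R)},
\end{equation*}
while a trivial estimate gives $\int_{-\infty}^T |p(t+r) - p(t)|\,dt \le 2\|p\|_{L^1(\mathbb R)}$. Combining these two and multiplying by $\|q\|_\infty \,\mathrm{Leb}(\Omega)$, the second contribution again reduces to a multiple of $\int_0^\infty (r \wedge 1)\nu(r)\,dr$, finite by \ref{H0}. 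Summing the two pieces proves the claimed integrability of the absolute-value integrand, and consequently $H^{\nu,*}\varphi \in L^1((-\infty,T) \times \Omega)$ since it is pointwise dominated.

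The main obstacle is the temporal piece, where the crude bound $|p(t+r) - p(t)| \le r\|p'\|_\infty$ fails to be integrable over the unbounded interval $(-\infty, T)$. The hypothesis $p, p' \in L^1(\mathbb R)$ is precisely what lets me pair the Fubini-type bound (effective for small $r$) with the trivial $L^1$ bound (effective for large $r$) to recover the $r \wedge 1$ behaviour that matches the integrability in \ref{H0}; this is the only place where the stronger integrability assumption on $p$ is essential.
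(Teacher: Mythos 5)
Your proof is correct and follows essentially the same route as the paper's: both exploit the product structure $\varphi=pq$ to split $e^{r\mathcal L}\varphi(t+r,x)-\varphi(t,x)$ into a spatial-increment term (controlled via Remark \ref{rmk:smallr}, the contraction property and the boundedness of $\Omega$, yielding an $(r\wedge1)$ bound matched to \ref{H0}) and a temporal-increment term (controlled by $\|p\|_{L^1(\R)}$ and $\|\partial_t p\|_{L^1(\R)}$). The only difference is that the paper delegates the bound on the temporal piece to \cite[Lemma 4.3]{DTZ18}, whereas you reprove the needed estimate $\int_{\R}|p(t+r)-p(t)|\,dt\le (r\|\partial_tp\|_{L^1(\R)})\wedge(2\|p\|_{L^1(\R)})$ directly via the fundamental theorem of calculus and Fubini --- a harmless, self-contained substitution.
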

\begin{proof}
We rewrite 
\begin{align*}
H^{\nu,*} \varphi(t,x)&=\int_0^\infty e^{r\mathcal L}q(x)\left(p(t+r)-p(t)\right)\nu(r)\,dr+p(t)\int_0^\infty \left(e^{r\mathcal L}q(x)-q(x)\right)\nu(r)\,dr\\
&=: (I+II)(t,x).
\end{align*}
Then, with inequalities holding up to a constant
\begin{align*}
\int_{\mathbb R\times\Omega} |I(t,x)|\,dx\,dt&\le \|q\|_\infty \int_{\mathbb R} \left|\int_0^\infty \left(p(t+r)-p(t)\right)\nu(r)\,dr\right|\,dt\\
&\le  \|q\|_\infty\left(\|p\|_{L^1(\R)}+\|\partial_tp\|_{L^1(\R)}\right),
\end{align*}
where we use \cite[Lemma 4.3]{DTZ18} in the second inequality. Considering $II$,
\begin{align*}
\int_{\mathbb R\times\Omega} |II(t,x)|\,dx\,dt&\le \|p\|_{L^{1}(\R)} \int_\Omega\left|\int_0^\infty  \left(e^{r\mathcal L}q(x)-q(x)\right)\nu(r)\,dr\right|\,dx\\
&\le   \|p\|_{L^{1}(\R)} \int_\Omega\int_0^\infty \left(r(\delta+\|\mathcal Lq\|_\infty) \wedge 2\|q\|_\infty\right) \nu(r)\,dr\,dx,
\end{align*}
which is finite, and we proved the claim.
\end{proof}

\begin{proposition}\label{prop:dual} Assume \ref{H0} and \ref{H1'}.
Let  $u\in L^\infty((-\infty,T]\times\Omega)$ such that $u\in \text{Dom}(\mathcal H_0)$
 if restricted to $t\ge0$.
Then for every  $\varphi\in  C_c^1(0,T)\cdot \text{Dom}(\mathcal L)$
\begin{equation}
\langle  H^\nu u, \varphi \rangle=\langle  u, H^{\nu,*} \varphi \rangle.
\label{eq:dual_pair_Halpha}
\end{equation}
\end{proposition}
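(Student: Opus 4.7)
\bigskip
\noindent
\textbf{Proof proposal.} The structural identity I want to exploit is the usual adjoint-swap: move $e^{r\mathcal L}$ off $u$ and onto $\varphi$ via the self-adjointness from \ref{H1'}, then perform the time shift $s=t-r$ to turn the argument $u(t-r,\cdot)\varphi(t,\cdot)$ into $u(s,\cdot)e^{r\mathcal L}\varphi(s+r,\cdot)$, which is exactly the shape of $H^{\nu,*}\varphi$. The proof will thus be an iterated Fubini combined with one change of variables in time. First, I would verify that both sides are well-defined: the right-hand side is finite because $u\in L^\infty$ and $H^{\nu,*}\varphi\in L^1((-\infty,T)\times\Omega)$ by Lemma \ref{lem:phiL1}; the left-hand side is finite because $\varphi$ has compact temporal support inside $(0,T)$, and on that support $H^\nu u$ is pointwise defined and bounded, using Theorem \ref{thm:HO_SG}(ii) together with $u\in L^\infty$ and \ref{H0} to control the $\int_t^\infty$ piece.

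Since $\nu$ is not integrable near $0$, direct Fubini fails, so the plan is to truncate. Set $\nu_\epsilon(r):=\nu(r)\mathbf 1_{\{r>\epsilon\}}$ and let $H^\nu_\epsilon$, $H^{\nu,*}_\epsilon$ be the corresponding operators. Because $\int_\epsilon^\infty \nu(r)\,dr<\infty$ and all functions involved are bounded on $\supp\varphi$, Fubini is unrestricted for the truncated versions. After swapping the $dr$ and $dt\,dx$ integrals, I apply the self-adjointness \eqref{eq:sa}; here note that \eqref{eq:sa} is stated only on $X$, but combined with the existence of the density $p^\Omega_r(x,y)$ it yields $p^\Omega_r(x,y)=p^\Omega_r(y,x)$ a.e., so the identity extends to arbitrary bounded measurable $v,w$ by Fubini. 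Next I perform $s=t-r$ in the time integral; since $\varphi(t,x)$ is compactly supported in $t\in(0,T)$, the function $e^{r\mathcal L}\varphi(s+r,x)$ is supported in $s\in(-r,T-r)\subset (-\infty,T]$, so the shift is clean. A final Fubini repackages the result into $\langle u,H^{\nu,*}_\epsilon\varphi\rangle$, proving \eqref{eq:dual_pair_Halpha} with $\nu$ replaced by $\nu_\epsilon$.

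The last step is to send $\epsilon\downarrow 0$. On the right-hand side, pointwise convergence $H^{\nu,*}_\epsilon\varphi\to H^{\nu,*}\varphi$ together with the bound $|H^{\nu,*}_\epsilon\varphi|\le |I|+|II|$ extracted in the proof of Lemma \ref{lem:phiL1} provides an $L^1$-dominant, so DCT delivers $\langle u,H^{\nu,*}_\epsilon\varphi\rangle\to\langle u,H^{\nu,*}\varphi\rangle$. On the left-hand side, pointwise convergence $H^\nu_\epsilon u(t,x)\to H^\nu u(t,x)$ on $\supp\varphi$ follows from $u\in\text{Dom}(\mathcal H_0)$ restricted to $t\ge 0$: splitting the $r$-integral at $r=t$, Remark \ref{rmk:smallr} and \ref{H0} give an $O(r)$-bound times $\nu(r)$ near $0$ (integrable) and a uniform $L^\infty$-bound for $r$ bounded away from $0$ (integrable against $\nu$ on $[\epsilon,\infty)$), so $|H^\nu_\epsilon u|$ is uniformly bounded on $\supp\varphi$ by a constant, which is integrable against $|\varphi|$ on the compact support; DCT closes the argument. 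I expect the delicate step to be precisely the domination for $H^\nu_\epsilon u$ in the small-$r$ regime together with the extension of \eqref{eq:sa} to $L^\infty\times L^\infty$ pairings; both are routine but need the kernel symmetry and the regularity supplied by $u\in\text{Dom}(\mathcal H_0)$ on $[0,T]\times\Omega$.
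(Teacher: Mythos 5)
Your proposal is correct and follows essentially the same route as the paper's proof: truncate the singular Lévy measure at $\epsilon$, use Fubini and the self-adjointness \eqref{eq:sa} together with the time shift to move the operator onto $\varphi$, control the small-$r$ regime of $H^\nu u$ on $\supp\varphi$ via $u\in\text{Dom}(\mathcal H_0)$ (splitting the $r$-integral at the left endpoint of the temporal support), and pass to the limit with DCT using Lemma \ref{lem:phiL1} on the adjoint side. Your explicit remark that \eqref{eq:sa} must be extended from $X$ to bounded measurable functions via the symmetry of the density $p^\Omega_r$ is a point the paper leaves implicit, but it does not change the argument.
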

\begin{proof}
Let $k>0$ such that $\varphi(t)=0$ for every $t\le k$. 
Note that we have the bound for all $t>k$
\begin{align*}
\left|e^{r\mathcal H}u(t,x)-u(t,x)\right|&= \left|e^{r\mathcal H_0}u(t,x)-u(t,x)\right|\mathbf 1_{\{r\le k\}}+\left|e^{r\mathcal L}u(t-r,x)-u(t,x)\right|\mathbf 1_{\{r> k\}}\\
&\le  r\left(\delta+\|\mathcal H_0 u\|_{C_\infty([0,T]\times	\Omega)}\right)\mathbf 1_{\{r\le k\}} +2\|u\|_{B((-\infty,T]\times	\Omega)}\mathbf 1_{\{r> k\}},
\end{align*}
 and  $\LHMp u$ (defined in \eqref{eq:LHM_c}) is well defined for each $(t,x)\in (0,T]\times\Omega$. 
 By the above remark and $\varphi\in L^1(\mathbb R\times\Omega)$ we can apply DCT in the second identity  below
\begin{align*}
\langle \LHMp u, \varphi \rangle&=\int_{-\infty}^T \int_\Omega \left(\int_0^\infty \left(e^{r\mathcal H}u(t,x)-u(t,x)\right)\nu(r)\,dr\right) \varphi(t,x)\,dx\,dt  \\
&=\lim_{\epsilon\downarrow0}\Bigg(\int_{-\infty}^T \int_\Omega \left(\int_\epsilon^\infty e^{r\mathcal H}u(t,x)\nu(r)\,dr\right) \varphi(t,x)\,dx\,dt \\
&\quad\quad\quad -\int_{-\infty}^T \int_\Omega \left(\int_\epsilon^\infty \varphi(t,x)\nu(r)\,dr\right)  u(t,x) \,dx\,dt \Bigg) \\
&=\lim_{\epsilon\downarrow0}\Bigg(\int_{-\infty}^T \int_\Omega u(t,y) \left(\int_\epsilon^\infty e^{r\mathcal L}\varphi(t+r,y)\nu(r)\,dr\right) \,dy\,dt \\
&\quad\quad\quad -\int_{-\infty}^T \int_\Omega \left(\int_\epsilon^\infty \varphi(t,y)\nu(r)\,dr\right)  u(t,y) \,dy\,dt \Bigg) \\
&=\int_{-\infty}^T \int_\Omega u(t,y) \left(\int_0^\infty \left(e^{r\mathcal L}\varphi(t+r,y)-\varphi(t,y)\right)\nu(r)\,dr\right) \,dy\,dt \\
&=\langle  u, H^{\nu,*} \varphi \rangle,
\end{align*}
where for the third identity we use \eqref{eq:sa}, Fubini's Theorem and $\varphi(t+r)= 0$ for $t\ge T-r$, and for the fourth  identity we use DCT, thanks to  Lemma \ref{lem:phiL1} and $u\in L^\infty((-\infty,T)\times\Omega)$.

\end{proof}

Our  approximation procedure, in the proof of Theorem \ref{thm:weak}, will be carried out using the following assumption on the approximating data.
\begin{description}
\item[(H2)\label{H2}] Let  $\phi$ be a linear combination of functions in $C^1_\infty(-\infty,0]\cap \{f'(0-)=0\}\cdot \text{Dom}(\mathcal L)$.
\end{description}
\begin{remark}\label{rmk:H2}
	 The  functions satisfying  \ref{H2} are dense in $L^\infty((-\infty,0)\times\Omega)$ with respect to bpw a.e. convergence. To prove it,  for $\text{Dom}(\mathcal L)\subset C_{\infty}(\Omega)$ one can use the the Stone-Weierstrass strategy in \cite[Appendix II]{T18} to show that the  functions satisfying  \ref{H2} are uniformly dense in $C_\infty((-\infty,0)\times\Omega)$, which in turn is  bpw a.e. dense in $L^\infty((-\infty,0)\times\Omega)$. If instead $\text{Dom}(\mathcal L)\subset C_{\infty}(\overline\Omega)$, then the same strategy holds, but now one should show that the  functions satisfying  \ref{H2} are uniformly dense in $C_\infty((-\infty,0)\times\overline\Omega)$.
\end{remark}

We state a natural assumption to apply Dynkin formula in the next Lemma.
\begin{description}
\item[(H2')\label{H2'}] The function  $\phi:(-\infty,0]\times \Omega\to \mathbb R$ is such that the extension of $\phi$ to $\phi(0)$ on $(0,T]\times  \Omega$ satisfies
$\phi\in \text{Dom}(\mathcal H)\subset\text{Dom}(\LHM)$.
\end{description}

\begin{remark}
	If $\phi$ satisfies \ref{H2}, then it  satisfies \ref{H2'}, as a consequence of $C^1_\infty(-\infty, T]\cdot \text{Dom}(\mathcal L)\subset \text{Dom}(\mathcal H)$. 
\end{remark}

\begin{remark}
	For the next two lemmas the domain $\Omega$ and the semigroup $e^{\mathcal L}$ only need to be as in Definition \ref{def:Hs}.
\end{remark}

\begin{lemma}\label{lem:dyn} Assume \ref{H0} and \ref{H2}. Let $g= f+f_\phi$, for $f\in L^\infty((0,T)\times\Omega)$, and
\begin{equation}
f_\phi(t,x):= \int_t^\infty \left(e^{r\mathcal H}\phi(t,x)- e^{r\mathcal L}\phi(0,x)\right) \,\nu(r)\,dr,\quad (t,x)\in(0,T]\times\Omega.
\label{eq:f_phi}
\end{equation}
Then $f_\phi$ lives in $ C_{\partial\Omega}([0,T]\times\Omega)$, and the Feynman-Kac formula   \eqref{eq:SRHO} for $g\equiv f+f_\phi,$ $ \phi_0\equiv\phi(0)$, equals the Feynman-Kac formula  \eqref{eq:SR} for $f$, $\phi$.
\end{lemma}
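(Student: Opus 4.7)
The natural approach is a double application of Dynkin's formula combined with a pointwise computation of $\LHMp$ on the extension of $\phi$ across $t=0$. Concretely, I would let $\tilde\phi(t,x):=\phi(0,x)$ for $t\in(0,T]$ and $\tilde\phi(t,x):=\phi(t,x)$ for $t\le 0$. The condition $p'(0-)=0$ in \ref{H2} is exactly the smooth-matching requirement that forces $\tilde\phi$ to lie in $C^1_\infty(-\infty,T]\cdot\text{Dom}(\mathcal L)\subset\text{Dom}(\mathcal H)\subset\text{Dom}(\LHM)$ (this is the content of \ref{H2'}, observed in the remark preceding the lemma), so by Theorem \ref{thm:HO_SG}-(i) the pointwise representation \eqref{eq:LHM_c} applies and $\LHM\tilde\phi=\LHMp\tilde\phi$.

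The pivotal identity I aim to establish is
\[
\LHMp\tilde\phi(t,x)=f_\phi(t,x)+\mathcal L^{\nu}\phi_0(x),\qquad (t,x)\in(0,T]\times\Omega,
\]
obtained by splitting $\int_0^\infty=\int_0^t+\int_t^\infty$ in \eqref{eq:LHM_c} and using $e^{r\mathcal H}\tilde\phi(t,x)=e^{r\mathcal L}\tilde\phi(t-r,x)$. For $r<t$ the extension gives $\tilde\phi(t-r,x)=\phi_0(x)$, contributing $\int_0^t(e^{r\mathcal L}\phi_0-\phi_0)(x)\nu(r)\,dr$; for $r>t$, adding and subtracting $e^{r\mathcal L}\phi_0(x)$ inside the integrand produces $f_\phi(t,x)+\int_t^\infty(e^{r\mathcal L}\phi_0-\phi_0)(x)\nu(r)\,dr$, and the two `$\phi_0$-integrals' then fuse into the Phillips formula for $\mathcal L^{\nu}\phi_0(x)$. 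The claim $f_\phi\in C_{\partial\Omega}([0,T]\times\Omega)$ I would handle separately by DCT on $f_\phi(t,x)=\int_t^\infty(p(t-r)-p(0))e^{r\mathcal L}q(x)\nu(r)\,dr$, via the $t$-independent bound $|p(t-r)-p(0)|\le C_p(r\wedge 1)$ (using $\|p'\|_\infty r$ on $\{r\le 1\}$ and $2\|p\|_\infty$ on $\{r>1\}$), combined with \ref{H0}; continuity in $x$ and vanishing at $\partial\Omega$ follow from $e^{r\mathcal L}q\in C_\infty(\Omega)$ by Fellerness, with the same dominating function.

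With the key identity in hand, I would invoke Dynkin's formula \cite[Corollary of Theorem 5.1]{Dyn65} at $\tau:=\tau_0(t)\wedge\tau_0(\tau_\Omega(x))$, which is integrable since $\tau\le\tau_0(T)$ and $\mathbf E[\tau_0(T)]<\infty$ by Remark \ref{rmk:dens}-(ii). Applied to $\tilde\phi$ under the Feller process $r\mapsto(t-S^\nu_r,B^x_{S^\nu_r})$ with generator $\LHM$, the cemetery value $\tilde\phi(\cdot,\partial)=0$ collapses the boundary term on $\{\tau_0(\tau_\Omega(x))\le\tau_0(t)\}$ and $\tilde\phi(t,x)=\phi_0(x)$ for $t>0$ collapses the initial value, giving
\[
\mathbf E\!\left[\phi\!\left(t-S^\nu_{\tau_0(t)},B^x_{S^\nu_{\tau_0(t)}}\right)\mathbf 1_{\{\tau_0(t)<\tau_0(\tau_\Omega(x))\}}\right]-\phi_0(x)=\mathbf E\!\left[\int_0^\tau(f_\phi+\mathcal L^{\nu}\phi_0)(t-S^\nu_r,B^x_{S^\nu_r})\,dr\right].
\]
Dynkin applied to $\phi_0\in\text{Dom}(\mathcal L^{\nu})$ under $r\mapsto B^x_{S^\nu_r}$ (life time $\tau_0(\tau_\Omega(x))$ by Remark \ref{rmk:lifetime}) stopped at $\tau$ yields
\[
\mathbf E\!\left[\phi_0\!\left(B^x_{S^\nu_{\tau_0(t)}}\right)\mathbf 1_{\{\tau_0(t)<\tau_0(\tau_\Omega(x))\}}\right]-\phi_0(x)=\mathbf E\!\left[\int_0^\tau\mathcal L^{\nu}\phi_0(B^x_{S^\nu_r})\,dr\right].
\]
Subtracting these two identities annihilates both $\phi_0(x)$ and the $\mathcal L^{\nu}\phi_0$-integral; adding $\mathbf E[\int_0^\tau f(t-S^\nu_r,B^x_{S^\nu_r})\,dr]$ to both sides yields exactly the asserted equality between \eqref{eq:SR} with data $(f,\phi)$ and \eqref{eq:SRHO} with data $(f+f_\phi,\phi(0))$. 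The main technical obstacle will be the bookkeeping showing that the cemetery extension correctly collapses the stopped-process boundary terms into the indicator-weighted expectations, and checking the integrability conditions needed for Dynkin's formula for both applications.
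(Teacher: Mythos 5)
Your proposal is correct and follows essentially the same route as the paper: the pivotal identity $\LHMp\tilde\phi=f_\phi+\mathcal L^{\nu}\phi(0)$ (the paper derives it by noting the integrand of \eqref{eq:f_phi} vanishes for $r<t$ and then adding and subtracting $\phi(0,x)$, which is equivalent to your splitting), followed by two applications of Dynkin's formula \cite[Corollary of Theorem 5.1]{Dyn65} — once for $\tilde\phi\in\text{Dom}(\LHM)$ under the space-time process and once for $\phi(0)\in\text{Dom}(\mathcal L^{\nu})$ under $r\mapsto B^x_{S^\nu_r}$ — and subtraction. The only cosmetic difference is that the paper obtains $f_\phi\in C_{\partial\Omega}([0,T]\times\Omega)$ as a byproduct of the identity (both $\LHMp\phi$ and $\mathcal L^{\nu}\phi(0)$ are continuous) rather than by your direct DCT argument, which also works.
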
 

\begin{proof} Extend $\phi$ to $\phi(0)$ on $(0,T]$. Observe that for $t>0$
\begin{align*}
 f_\phi(t,x)&=  \int_0^\infty \left(e^{r\mathcal H}\phi(t,x)- e^{r\mathcal L}\phi(0,x)\right) \,\nu(r)\,dr\\
&=  \int_0^\infty \left(e^{r\mathcal H}\phi(t,x)\pm \phi(0,x)- e^{r\mathcal L}\phi(0,x)\right) \,\nu(r)\,dr\\
&=  \int_0^\infty \left(e^{r\mathcal H}\phi(t,x)- \phi(0,x)\right) \,\nu(r)\,dr+\int_0^\infty \left( \phi(0,x)- e^{r\mathcal L}\phi(0,x)\right) \,\nu(r)\,dr\\
&=\LHMp\phi(t,x) -\mathcal L^{\nu}\phi(0,x),
\end{align*}
where  $\LHMp\phi\in C_{\partial\Omega}([0,T]\times\Omega)$ by \ref{H2'} and Theorem \ref{thm:HO_SG}-(i), and $\mathcal L^{\nu}\phi$ is a linear combination of elements in $ C_{\infty}(\Omega)$ by \ref{H2} and $\text{Dom}(\mathcal L)\subset\text{Dom}(\mathcal L^\nu)$.
 Rearranging, we also proved that for $t>0$ 
\begin{equation}
f_\phi+\mathcal L^\nu \phi= \LHMp\phi= \LHM\phi.
\label{eq:equal}
\end{equation}
Also, Dynkin formula \cite[Corollary of Theorem 5.1]{Dyn65}  applied to the process of Definition \ref{def:L_Omega_alpha}, gives  for $t>0$
\begin{align}\label{eq:dyn}
\mathbf E\left[\phi\left(0,B^x_{S^\nu_{\tau_0(t)}}\right)\right]-\phi(t,x)&=\, \mathbf E\left[\int_0^{\tau_0(t)} \mathcal L^{\nu} \phi\left(t-S^\nu_r,B^x_{S^\nu_r}\right)dr\right],
\end{align}
where we use $\phi(t)=\phi(0)$ on $(0,T]$ and $\phi(0)\in \text{Dom}(\mathcal L)\subset\text{Dom}(\mathcal L^{\nu})$. We conclude justifying the following equalities for $t>0$,
\begin{align*} 
\mathbf E\left[\phi\left(t-S^\nu_{\tau_0(t)},B^x_{S^\nu_{\tau_0(t)}}\right)\right]&=\, \mathbf E\left[\int_0^{\tau_0(t)} \mathcal H^\nu \phi\left(t-S^\nu_r,B^x_{S^\nu_r}\right)dr\right]+\phi(t,x)\\
&=\, \mathbf E\left[\int_0^{\tau_0(t)} (f_\phi+\mathcal L^{\nu}\phi)\left(t-S^\nu_r,B^x_{S^\nu_r}\right)dr\right]+\phi(t,x)\\
&=\, \mathbf E\left[\int_0^{\tau_0(t)} f_\phi\left(t-S^\nu_r,B^x_{S^\nu_r}\right)dr\right]+\phi(t,x)\\
&\quad\quad+\mathbf E\left[\phi\left(0,B^x_{S^\nu_{\tau_0(t)}}\right)\right]-\phi(t,x).
\end{align*}
The first equality holds by  Dynkin formula \cite[Corollary of Theorem 5.1]{Dyn65}  combining Theorem \ref{thm:HO_SG}-(i) and \ref{H2'}; the second equality holds by \eqref{eq:equal}; the third equality holds by \eqref{eq:dyn}. \\
\end{proof}

We will also use the following  lemma.
\begin{lemma}\label{lem:cap_to_mar} Assume \ref{H0}.
If $ u\in \text{Dom}(\mathcal H_0)$  and it is extended to $\phi\in L^\infty((-\infty,0)\times\Omega)$  for $t<0$, then $ H^\nu \tilde u =\LHp u  +f_\phi$ for $t>0$, where $f_\phi$ is defined as in \eqref{eq:f_phi} and $\tilde u$ is the extension of $ u$ to $\phi$ on $t\ge 0$.
\end{lemma}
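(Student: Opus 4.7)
The plan is to carry out a direct pointwise computation by splitting the integral defining $H^\nu\tilde u$ at $r=t$, exploiting the fact that $\tilde u(t-r,x) = u(t-r,x)$ on the range $r \in (0,t)$ and $\tilde u(t-r,x) = \phi(t-r,x)$ on the range $r \in (t,\infty)$ (reading the statement as $\tilde u = u$ on $[0,T]\times \Omega$ and $\tilde u = \phi$ on $(-\infty,0)\times\Omega$).

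On the first piece, I would use Remark \ref{rmk:on_HSG} to rewrite $e^{r\mathcal L}u(t-r,x) = e^{r\mathcal H}u(t,x)$, so that
\[
\int_0^t\bigl(e^{r\mathcal L}\tilde u(t-r,x) - \tilde u(t,x)\bigr)\nu(r)\,dr = \int_0^t\bigl(e^{r\mathcal H}u(t,x)-u(t,x)\bigr)\nu(r)\,dr,
\]
which is the first half of $\LHp u(t,x)$. On the second piece, I would add and subtract $e^{r\mathcal L}\phi(0,x)$ inside the integrand and then use $u(0,x) = \phi(0,x)$, which is legitimate since $u\in\text{Dom}(\mathcal H_0)\subset C_{\partial\Omega}([0,T]\times\Omega)$. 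This yields
\[
\int_t^\infty\bigl(e^{r\mathcal L}\phi(t-r,x) - u(t,x)\bigr)\nu(r)\,dr = \int_t^\infty\bigl(e^{r\mathcal L}u(0,x)-u(t,x)\bigr)\nu(r)\,dr + f_\phi(t,x),
\]
after identifying $e^{r\mathcal H}\phi(t,x) = e^{r\mathcal L}\phi(t-r,x)$ in the definition \eqref{eq:f_phi}. Summing the two pieces reproduces $\LHp u(t,x) + f_\phi(t,x)$.

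The only technical step is to justify that each of the split integrals converges pointwise, so that the splitting is legitimate. For the $(0,t)$ piece, note that $e^{r\mathcal H}u(t,x) = e^{r\mathcal H_0}u(t,x)$ whenever $r<t$; thus the bound $\|e^{r\mathcal H_0}u - u\|_\infty \le r(\delta + \|\mathcal H_0 u\|_\infty)$ for small $r$ (cf.\ Remark \ref{rmk:smallr}) combined with $\int_0^1 r\,\nu(r)\,dr<\infty$ from \ref{H0} handles the singularity at $0$. For the $(t,\infty)$ piece there is no small-$r$ issue, and large-$r$ integrability follows from boundedness of $u$ and $\phi$ together with $\int_1^\infty \nu(r)\,dr<\infty$. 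I do not anticipate a genuine obstacle here; the content of the lemma is purely a rearrangement of definitions, and the main thing to double-check is that no Fubini argument is needed and the identity truly holds pointwise in $(t,x)\in(0,T]\times\Omega$.
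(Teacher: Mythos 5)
Your proposal is correct and follows essentially the same route as the paper: split the integral at $r=t$, identify the first piece with the first half of $\LHp u$, and add and subtract $e^{r\mathcal L}\phi(0,x)$ on the tail to produce $f_\phi$ plus the second half of $\LHp u$ (using $u(0)=\phi(0)$, which is implicit in $\tilde u$ being a genuine extension). Your extra integrability check near $r=0$ via $u\in\text{Dom}(\mathcal H_0)$ and \ref{H0} is a detail the paper leaves tacit but is the right justification.
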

\begin{proof}
Exploiting \eqref{eq:LHM_c}, simply compute for $t>0$, $x\in\Omega$ 
\begin{align*}
	\LHMp \tilde u(t,x)&= \int_0^\infty \left(e^{s\mathcal H}\tilde u(t,x)-\tilde u(t,x)\right) \,\nu(r)\,dr\\
&= \int_0^t \left( e^{r\mathcal H}u(t,x)-u(t,x)\right) \,\nu(r)\,dr+\int_t^\infty \left( e^{r\mathcal H} \phi(t,x)-u(t,x)\right) \,\nu(r)\,dr\\
&\quad\quad \pm \int_t^\infty e^{r\mathcal L} \phi(0,x) \,\nu(r)\,dr\\
&= \int_0^t \left( e^{r\mathcal H}u(t,x)-u(t,x)\right) \,\nu(r)\,dr+\int_t^\infty \left( e^{r\mathcal L} \phi(t,x)-u(t,x)\right) \,\nu(r)\,dr\\
&\quad\quad  +\int_t^\infty \left(e^{r\mathcal H} \phi(t,x)-e^{r\mathcal L} \phi(t,x)\right) \,\nu(r)\,dr \\
&=\LHp u(t,x)  +f_\phi(t,x).
\end{align*}
\end{proof}

We now define our weak  solution  for problem \eqref{preRLHO_nl}.

\begin{definition}\label{def:weak}
For given $f\in L^\infty((0,T)\times\Omega)$ and $\phi\in L^\infty((-\infty,0)\times\Omega)$, a function $u$ is said to be a \emph{weak solution to } \eqref{preRLHO_nl} if $u\in L^\infty((-\infty,T)\times\Omega)$ and 
\begin{equation}
\left\{\begin{split}
\langle u,H^{\nu,*}\varphi\rangle&=\langle -f,\varphi\rangle, &&\text{for } \varphi\in  C_c^1(0,T)\cdot \text{Dom}(\mathcal L), \\ 
u&=\phi, &&\text{a.e. on } (-\infty,0)\times\Omega.
\end{split}
\right.
\label{eq:weak_sol}
\end{equation}
\end{definition}

\color{red}
\color{black}
\begin{theorem}\label{thm:weak}
Assume \ref{H0} and \ref{H1'}, and let $f\in L^\infty((0,T)\times\Omega)$, $\phi\in L^\infty((-\infty,0)\times\Omega)$. Then the Feynman-Kac formula defined in \eqref{eq:SR} is a weak solution to \eqref{preRLHO_nl}.
\end{theorem}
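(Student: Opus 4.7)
The plan is to verify the weak formulation first for smoothed approximating data for which a strong pointwise solution is available, and then pass to the limit. For such approximating data $(f_n,\phi_n)$ the argument unfolds in four steps: (i) Lemma \ref{lem:sdg} produces a solution in the domain of the generator to the Caputo-type problem \eqref{postRLHO} on $[0,T]\times\Omega$ with initial datum $\phi_n(0)$ and an appropriate forcing; (ii) Lemma \ref{lem:dyn} identifies its Feynman--Kac representation \eqref{eq:SRHO} with the restriction to $(0,T]\times\Omega$ of the target formula \eqref{eq:SR}; (iii) Lemma \ref{lem:cap_to_mar} converts the Caputo-type generator $\LHp$ into the operator $H^\nu$ applied to the extension to $(-\infty,T]$; and (iv) Proposition \ref{prop:dual} turns the resulting pointwise identity into the weak formulation.

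Concretely, by Remark \ref{rmk:H2} fix a sequence $\phi_n$ satisfying \ref{H2} with $\phi_n\to\phi$ bpw a.e.\ on $(-\infty,0)\times\Omega$; this automatically yields \ref{H2'} and therefore $f_{\phi_n}\in C_{\partial\Omega}([0,T]\times\Omega)$ via the computation in the proof of Lemma \ref{lem:dyn}. Next construct $f_n\in C_{\partial\Omega}([0,T]\times\Omega)$ with the prescribed value $f_n(0,\cdot)=-\mathcal H^\nu\phi_n(0,\cdot)$ and $f_n\to f$ bpw a.e.; the constraint at $t=0$ ensures that $g_n:=f_n+f_{\phi_n}$ fulfils the compatibility condition $g_n+\mathcal L^\nu\phi_n(0)\in C_{0,\partial\Omega}([0,T]\times\Omega)$, so Lemma \ref{lem:sdg} furnishes a solution $v_n$ in the domain of the generator whose Feynman--Kac representation agrees with the restriction of \eqref{eq:SR} for data $(f_n,\phi_n)$ by Lemma \ref{lem:dyn}. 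Let $u_n$ denote the extension of $v_n$ by $\phi_n$ to $(-\infty,0)\times\Omega$. Lemma \ref{lem:cap_to_mar} then gives the pointwise equation
\begin{equation*}
H^\nu u_n=\LHp v_n+f_{\phi_n}=-g_n+f_{\phi_n}=-f_n\quad\text{on }(0,T]\times\Omega,
\end{equation*}
and Proposition \ref{prop:dual} upgrades this to $\langle u_n,H^{\nu,*}\varphi\rangle=\langle -f_n,\varphi\rangle$ for every $\varphi\in C_c^1(0,T)\cdot\text{Dom}(\mathcal L)$.

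To close, Corollary \ref{cor:convergenceSR} yields $u_n\to u$ bpw a.e.\ on $(-\infty,T)\times\Omega$ and Lemma \ref{lem:phiL1} provides $H^{\nu,*}\varphi\in L^1((-\infty,T)\times\Omega)$, so DCT gives $\langle u_n,H^{\nu,*}\varphi\rangle\to\langle u,H^{\nu,*}\varphi\rangle$; analogously $\langle -f_n,\varphi\rangle\to\langle -f,\varphi\rangle$ since $\varphi\in L^1$; finally $u=\phi$ a.e.\ on $(-\infty,0)$ by construction. The main obstacle is a regularity gap: both Lemma \ref{lem:cap_to_mar} and Proposition \ref{prop:dual} require the restriction of $u_n$ to $t\ge0$ to lie in $\text{Dom}(\mathcal H_0)$, whereas Lemma \ref{lem:sdg} alone only delivers $v_n\in\text{Dom}(\LH)$, and the latter can be strictly larger (e.g.\ the fractional Sobolev space $H^{2\alpha}$ vs.\ $H^2$ for the fractional Laplacian). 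To bridge this I would insert a further resolvent regularization $v_n\mapsto v_{n,\lambda}:=\lambda(\lambda-\mathcal H_0)^{-1}v_n\in\text{Dom}(\mathcal H_0)$ and take $\lambda\to\infty$ before the limit in $n$; uniform convergence $v_{n,\lambda}\to v_n$ on $C_{\partial\Omega}([0,T]\times\Omega)$ together with the strong continuity of $e^{r\mathcal H}$ should preserve the pointwise identity and let the whole scheme close via a diagonal extraction. Secondary care is needed to control the small-$r$ singularity of $\nu$ uniformly in $\lambda$, which is precisely the role of the bound on $\mathcal H_0 v_{n,\lambda}$ used in the proof of Proposition \ref{prop:dual}.
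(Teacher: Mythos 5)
Your architecture is the paper's: approximate the data, solve the Caputo-type problem \eqref{postRLHO} with forcing $f_n+f_{\phi_n}$ via Lemma \ref{lem:sdg}, identify the result with the restriction of \eqref{eq:SR} via Lemma \ref{lem:dyn}, convert $\LHp$ into $\LHMp$ acting on the extension via Lemma \ref{lem:cap_to_mar}, dualize via Proposition \ref{prop:dual}, and pass to the limit with Corollary \ref{cor:convergenceSR}, Lemma \ref{lem:phiL1} and dominated convergence. (The paper performs three sequential limits, first in $f$ and then in $\phi$, where you take a joint limit with a diagonal extraction; that difference is immaterial.) You also correctly isolate the genuinely delicate point: Lemma \ref{lem:sdg} only delivers a solution in $\text{Dom}(\LH)$, whereas Lemma \ref{lem:cap_to_mar} and Proposition \ref{prop:dual} need membership in $\text{Dom}(\mathcal H_0)$.

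Where you diverge is the patch for this gap, and your version as written has a defect. You regularize by $v_{n,\lambda}:=\lambda(\lambda-\mathcal H_0)^{-1}v_n$; but since $e^{s\mathcal H_0}$ acts at $t=0$ purely in the space variable, $v_{n,\lambda}(0,\cdot)=\lambda(\lambda-\mathcal L)^{-1}\phi_n(0,\cdot)\neq\phi_n(0,\cdot)$ in general, so the regularized function no longer matches the past datum at time zero. This matching is used tacitly throughout the chain: the identity $\LHMp\tilde u=\LHp u+f_\phi$ of Lemma \ref{lem:cap_to_mar} requires $u(0)=\phi(0)$ so that the term $\int_t^\infty e^{r\mathcal L}u(0,x)\,\nu(r)\,dr$ in $\LHp u$ cancels against the corresponding term in $f_\phi$; Lemma \ref{lem:sdg} and the Feynman--Kac identification of Lemma \ref{lem:dyn} are likewise tied to the exact initial value $\phi_n(0)$. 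With $v_{n,\lambda}$ you pick up an error $\int_t^\infty e^{r\mathcal L}\left(v_{n,\lambda}(0,x)-\phi_n(0,x)\right)\nu(r)\,dr$ and lose the stochastic representation, so the scheme does not close as stated. The paper's fix is cleaner and exact: by Theorem \ref{thm:HO_SG}-(iv) one has $u-\phi(0)\in\text{Dom}(\LHkill)$, by Theorem \ref{thm:HO_SG}-(iii) the set $\text{Dom}(\mathcal H_0^{\text{kill}})$ is a core for $\LHkill$, so one chooses $\hat u_n\to u-\phi(0)$ in graph norm with $\hat u_n(0)=0$ identically, and $u_n:=\hat u_n+\phi(0)$ then lies in $\text{Dom}(\mathcal H_0)$ with $u_n(0)=\phi(0)$ exactly (Lemma \ref{lem:phi_0} handling the constant part). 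Your idea is repairable along these lines --- regularize $v_n-\phi_n(0)$ with the resolvent of $\mathcal H_0^{\text{kill}}$, which commutes with the subordinated semigroup and hence gives graph-norm convergence for $\LHkill$ as $\lambda\to\infty$, then add $\phi_n(0)$ back --- but the initial-value mismatch must be addressed, not merely the convergence of $v_{n,\lambda}$ to $v_n$.
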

\begin{proof} We assume for the first two steps that $\phi$ satisfies \ref{H2}. The proof for $e^{\mathcal L}$ acting on $C_\infty(\overline\Omega)$ is essentially identical\footnote{The only differences are in Step 1, where the Banach space for $\mathcal H^\nu_0$ is $C_\infty([0,T]\times\overline\Omega)$, and in Step 2, where the sequence $\{f_n\}_{n\in\mathbb N}$ will have to be selected from $C_\infty([0,T]\times\overline\Omega)$.}, and we omit it.\\

\emph{Step 1)} Let $u\in \text{Dom}(\mathcal H^\nu_0)$ be the unique solution in the domain of the generator to problem \eqref{postRLHO} for $g\equiv f+f_\phi$ and $\phi_0\equiv \phi(0)$, where $f_\phi\in C_{\partial\Omega}([0,T]\times\Omega)$ by Lemma \ref{lem:dyn}, and some $f\in C_{\partial\Omega}([0,T]\times\Omega)$ such that $f(0)=-f_\phi(0)-\mathcal L^{\nu}\phi(0)$. This implies that for any $\varphi\in  C_c^1(0,T)\cdot \text{Dom}(\mathcal L)$
\begin{equation}
\langle \LH  u+f_\phi, \varphi \rangle=\langle -f, \varphi \rangle.
\label{eq:sdg_au}
\end{equation}
By Theorem \ref{thm:HO_SG}-(iv) we are guaranteed that $u-\phi(0)\in \text{Dom}(\mathcal H^{\nu,\text{kill}}_0)$. Then, by Theorem \ref{thm:HO_SG}-(iii), we can pick $\{\hat u_n\}_{n\ge1}\subset \text{Dom}(\mathcal H_0^{\text{kill}})$ such that $\hat u_n\to u-\phi(0)$ and 
\[
 \LHp \hat u_n= \LH \hat u_n=\LHkill \hat u_n\to \LHkill( u-\phi(0))=\LH( u-\phi(0)),
\]
both uniformly as $n\to\infty$. Then, $u_n:=\hat u_n+\phi(0)\to u$ uniformly with $u_n(0)=\phi(0) \text{ for all }n$, and 
\begin{equation}
 \LHp  u_n = \LH \hat u_n+ \LH \phi(0)\to \LH( u-\phi(0))+\LH\phi(0) = \LH u,
\label{eq:conver}
\end{equation}
with uniform convergence, where we use Lemma \ref{lem:phi_0} and the linearity of $\LH$.
Define the extension of $u$ as 
\begin{equation}
\tilde u:=
\left\{\begin{split}
&u,&t>0,\\
&\phi,& t\le0.
\end{split}
\right.
\label{eq:extension}
\end{equation}
Then, for every $\varphi\in  C_c^1(0,T)\cdot \text{Dom}(\mathcal L)$, we can apply DCT as $n\to\infty$ to obtain   
\begin{align*}
\langle -f, \varphi \rangle\leftarrow\, \langle \LHp  u_n+f_\phi, \varphi \rangle=\langle \LHMp  \tilde u_n, \varphi \rangle= \langle  \tilde u_n,  H^{\nu,*}\varphi \rangle\to \langle  \tilde u,  H^{\nu,*}\varphi \rangle,
\end{align*}
where we use \eqref{eq:conver} and \eqref{eq:sdg_au}  in the first convergence, Lemma \ref{lem:cap_to_mar} with $ u_n\in \text{Dom}(\mathcal H_0)$ in the first equality,
 Proposition \ref{prop:dual} in the second equality with \ref{H2} and $ u_n\in \text{Dom}(\mathcal H_0)$, and Lemma \ref{lem:phiL1} with $\tilde u_n\to\tilde u$ uniformly on $(-\infty,T]\times\Omega$ for the second convergence, 
 where $\tilde u_n,$ $\tilde u$ are respectively  the extensions of $u_n$, $u$ to $\phi$ as defined in \eqref{eq:extension}. \\

\emph{Step 2)} For $f\in L^\infty((0,T)\times\Omega)$, let $u$ be the generalised solution to problem \eqref{postRLHO} for $g\equiv f+f_\phi$ and $\phi_0\equiv\phi(0)$.  Now pick a sequence  $\{f_n\}_{n\ge 1}\subset C_{\partial\Omega}([0,T]\times\Omega)$ such that $f_n\to f$ bpw a.e., and $f_n(0)=-f_\phi(0)-\mathcal L^{\nu}\phi(0)$ for each $n\in\mathbb N$. Then the respective solutions in the domain of the generator $u_n$ converge bpw to $u$, by Remark \ref{rem:convergenceSR}. And so for every $\varphi\in C_c^1(0,T)\cdot\text{Dom}(\mathcal L)$
\begin{align*}
\langle -f, \varphi \rangle\leftarrow \langle -f_n, \varphi \rangle= \langle  \tilde u_n,  H^{\nu,*}\varphi \rangle\to \langle  \tilde u,  H^{\nu,*}\varphi \rangle,
\end{align*}
where we can apply DCT in the second convergence thanks to  Lemma \ref{lem:phiL1}, and the equality holds by Step 1, where again the functions are extended to $\phi$ as in \eqref{eq:extension}.\\

\emph{Step 3)} Let $\phi\in L^\infty((-\infty,0)\times\Omega)$ and $f\in L^\infty((0,T)\times\Omega)$ and denote by $u$ the Feynman-Kac formula defined in \eqref{eq:SR} for such $\phi$ and $f$ and $t>0$, and denote by $\tilde u$ the extension of $u$ to $\phi$ for $t<0$. By Remark \ref{rmk:H2} we can take $\phi_n\to \phi$ bpw a.e., and $\phi_n$ satisfies \ref{H2} for each $n\in\mathbb N$. Denote by $\tilde u_n$ the extension of $u_n$ to $\phi_n$ as in \eqref{eq:extension}, where $u_n$ is the generalised solution to problem \eqref{postRLHO} for $g\equiv f+f_{\phi_n}$ and $\phi_0\equiv\phi_n(0)$. Then, by Lemma \ref{lem:dyn} combined with the representation \eqref{eq:SRHO} of each $u_n$, we can apply  Corollary \ref{cor:convergenceSR} to obtain  as $n\to\infty$
\[
\tilde u_n\to \tilde u\text{ bpw a.e. on }(-\infty,T]\times\Omega.
\]
 Then, for every $\varphi\in C_c^1(0,T)\cdot \text{Dom}(\mathcal L)$,
\[
\langle -f, \varphi \rangle=\langle \tilde u_n, H^{\nu,*}\varphi \rangle\to \langle \tilde u, H^{\nu,*}\varphi \rangle,\quad \text{as }n\to\infty,
\]
where we use Step 2 for the equality and we use Lemma \ref{lem:phiL1} to apply DCT, and we are done.

\end{proof}



\section*{Acknowledgements}
J. Herman and  I. Johnston are supported by the UK EPSRC funding as a part of the MASDOC DTC, Grant reference number EP/HO23364/1. 

\newpage
\let\oldbibliography\thebibliography
\renewcommand{\thebibliography}[1]{\oldbibliography{#1}
\setlength{\itemsep}{0pt}}

\end{document}